\documentclass[12pt]{article}
\author{MIESZKO BASZCZAK}
\title{\textbf{ACTION OF WEYL GROUP ON \\EQUIVARIANT K-THEORY OF \\FLAG VARIETIES}}
\date{\vspace{-2ex}}
\makeatletter
\catcode`! 3
\catcode0 12

\def\Transpose #1{\romannumeral0\expandafter
	\Mar@Transpose@a\romannumeral`^^@\Mar@DoOneRow #1\\!\\}

\def\Mar@DoOneRow #1\\{\Mar@DoOneRow@a {}#1&^^@&}%

\def\Mar@DoOneRow@a #1#2&{%
	\if^^@\detokenize{#2}\expandafter\@gobble\fi
	\Mar@DoOneRow@a {#1#2\\}%
}%

\def\Mar@Transpose@a #1#2\\{\ifx!#2\expandafter\Mar@FinishTranspose\fi
	\expandafter\Mar@Transpose@b\romannumeral`^^@\Mar@DoOneRow@a {}#2&^^@&#1}

\def\Mar@Transpose@b #1#2^^@\\{\Mar@Join {}#2^^@!#1}

\def\Mar@Join #1#2\\#3!#4\\%
{\if^^@\detokenize{#3}\expandafter\Mar@EndJoin\fi
	\Mar@Join {#1#2&#4\\}#3!}%

\def\Mar@EndJoin\Mar@Join #1^^@!^^@\\{\Mar@Transpose@a {#1^^@\\}}

\def\Mar@FinishTranspose
#1&^^@&#2\\^^@\\{ #2}

\catcode`! 12
\catcode0 15 
\makeatother

\usepackage{fancyhdr}
\usepackage{amsmath}
\usepackage{array}
\usepackage{parskip}
\usepackage{iftex}
\ifPDFTeX
\usepackage[T1]{fontenc}
\usepackage{mathpazo}
\else
\usepackage{fontspec}
\fi
\usepackage{amsmath}
\usepackage{float}
\usepackage{amssymb,graphicx}
\usepackage{array}
\usepackage{mathtools}
\usepackage{amsthm}
\usepackage{xfrac}
\usepackage{stackrel}
\usepackage{multicol}
\usepackage{comment}
\usepackage{centernot}
\usepackage{xcolor}
\usepackage{graphicx}
\usepackage{mathrsfs}
\usepackage{lmodern}
\usepackage{subcaption}
\usepackage{tikz-cd}
\usepackage{hyperref}
\usepackage{xcolor}
\usepackage{lscape}
\usepackage{pdfpages}
\usepackage{amsfonts}
\usepackage{color}
\usepackage{colortbl}
\usepackage[backend=bibtex, natbib, style=alphabetic, maxbibnames=10, minalphanames=4, maxalphanames=4, doi=false,isbn=false,url=false,eprint=false]{biblatex}
\usepackage{geometry}

\addbibresource{bibliography.bib}  
\newcommand\restr[2]{{
		\left.\kern-\nulldelimiterspace 
		#1 
		\vphantom{\big|}
		\right|_{#2}
}}

\definecolor{grey}{RGB}{220,220,220}

\theoremstyle{definition}
\newtheorem{defn}{Definition}[section]
\newtheorem{uwg}[defn]{Remark}

\theoremstyle{plain}

\newtheorem{tw}[defn]{Theorem}
\newtheorem{lem}[defn]{Lemma}

\newtheorem{stw}[defn]{Proposition}

\newcommand{\id}{\mathrm{id}}

\newcommand{\sgn}{\mathrm{sgn}}
\newcommand{\oo}[1][\omega]{[\mathcal{O}_{#1}]}
\newcommand{\ooo}[1][\omega]{[\mathcal{I}_{#1}]}
\newcommand{\oooo}[1][\omega]{[\mathcal{O}_{e_{#1}}]}
\newcommand{\xx}[1][\omega]{[X_{#1}]}
\newcommand{\llll}[1][\alpha_i]{[\mathcal{L}(#1)]}
\newcommand{\lllll}[1][-\alpha_i]{[\mathcal{L}(#1)]}
\newcommand{\dd}{\text{: }}
\newcommand{\mC}{\text{MC}}

\DeclareMathOperator{\Aut}{Aut}

\begin{document}
	\emergencystretch 3em
	
	\maketitle
	
	\begin{abstract}
		\noindent We describe the action of the Weyl group of a semi simple linear group $G$ on cohomological and K-theoretic invariants of the generalized flag variety $G/B$. We study the automorphism $s_i$, induced by the reflection in the simple root, on the equivariant $K$-theory ring $K_T(G/B)$ using divided difference operators. Using the localization theorem for torus action and Borel presentation for the equivariant K-theory ring, we calculate the formula for this automorphism. Moreover, we expand this formula in the basis consisting of structure sheaves classes of Schubert varieties. We provide effective formula (applying properties of Weyl groups) for the approximation of this expansion, more specifically for the part corresponding to Schubert varieties with the fixed dimension, which in the case of $G$ being a special linear group is more exact. Finally, we discuss the above-mentioned formula in the basis of motivic Chern classes of Schubert varieties.
	\end{abstract}

	\tableofcontents
	\section*{Introduction}
	Let $G$ be a complex semi-simple Lie group. Let $B$ be a Borel subgroup of $G$ and $G/B$ the associated full flag variety. The fundamental classes of Schubert varieties in $G/B$ form a basis of (equivariant) cohomology, treated as an algebra over the (equivariant) cohomology of a point. Similarly, the equivariant $K$-theory ring $K_T(G/B)$, described e.g. in \cite{Uma}, has a basis consisting of structure sheaves classes of Schubert varieties, treated as an algebra over $K_T(pt)$. \\ \\
	In both of these rings, there are many positivity properties. Such properties depict different invariants in different bases. From the most fundamental ones (\cite{Bri} in $K$-theory and \cite{AGM} in equivariant $K$-theory), through Chern-Schwartz-MacPherson classes positivity conjecture in ordinary cohomology (\cite{AMSS}) to the ones that are not proven yet, for example Chern-Schwartz-MacPherson classes positivity conjecture in equivairant cohomology (\cite{AM}) as well as motivic Chern classes positivity conjecture (\cite{FRW}), both strongly related to the automorphism $s_i$, which is induced by the reflection in a simple root of the Weyl group. Hence, an interesting question arises: how to describe this action on the basis of Schubert varieties? In cohomology, the answer is known and uses the Chevalley formula (as in \cite{AM}). In $K$-theory, however, the situation is a bit more complicated. \\ \\
	In this paper, we show that using the Chevalley formula one can calculate a part of the support of the automorphism $s_i$ of the Weyl group. Firstly, using the Borel presentation (Lemma \ref{Lemma - Borel presentation of cohomology} and Lemma \ref{Lemma - Borel presentation of K-theory}) we derive formulas
	\begin{align*}
		s_i^{coh} &= \id + c_1(\llll) \partial_i^{coh} \\
		s_i^K &= [\mathcal{L}(-\alpha_i)] \cdot \id - ([\mathcal{L}(-\alpha_i)] - 1)\partial_i^K
	\end{align*}
	for automorphism $s_i$ on both equivariant cohomology ring and equivariant $K$-theory ring (Lemma \ref{Lemma - formula for s_i^{coh}} and Lemma \ref{Lemma - formula for s_i^{K}}). Both of these formulas are well known facts, first of them is examined in \cite{Knu}, but the second one is not present anywhere in the currently available literature. The main result of this paper is Theorem \ref{Theorem - formula for s_i^K} proving the following formula
	\[
	s_i^K(\oo) = 
	\begin{cases*}
		a_{\omega s_i} [\mathcal{O}_{\omega s_i}]  +  \sum_{\beta \in C_i}  a_{\omega s_i s_\beta} [\mathcal{O}_{\omega s_i s_\beta}] + l.o.t. & \text{if } $l(\omega s_i) > l(\omega)$\\
		\oo & \text{otherwise}
	\end{cases*}
	\]
	where l.o.t. stands for lower order terms (see the explanation of the notion in Theorem \ref{Theorem - formula for s_i^K}). Moreover, for the special case of $SL_n/B$ we derive formula (Equation \eqref{Equation - s_i formula in SL_n})
	\begin{align*}
		s_i^K(\oo) &= (1 - e^{\omega (\alpha_i)}) \oo[\omega s_i] - \oo  \\ &- \sum_{\beta \neq \alpha_i \text{ positive root such that } l(\omega s_i s_\beta) = l(\omega) \text{ and } \langle \alpha_i, \beta \rangle > 0} e^{-\omega s_i s_\beta(\alpha_i)} \oo[\omega s_i s_\beta]  \\ &+ \sum_{\beta \neq \alpha_i \text{ positive root such that } l(\omega s_i s_\beta) = l(\omega) \text{ and } \langle \alpha_i, \beta \rangle < 0} e^{\omega(\alpha_i)} \oo[\omega s_i s_\beta] \\ &+ l.o.t.
	\end{align*}
	\\
	In Chapter \ref{preliminaries} we introduce the basic definitions and properties of the objects we study, such as equivariant cohomology and equivariant $K$-theory of flag varieties, classes of Schubert varieties in these rings and motivic Chern classes. In Chapter \ref{Action of Weyl group}, we describe the automorphism $s_i$ on equivariant cohomology and equivariant $K$-theory. Chapter \ref{Chevalley formula} clarifies the details of the Chevalley formula. Chapter \ref{Properties of action of Weyl group in oo basis} contains the proof of Theorem \ref{Theorem - formula for s_i^K} and Equation \eqref{Equation - s_i formula in SL_n}. In Chapter \ref{Properties of action of Weyl group in MC basis} we consider the automorphism $s_i$ in the motivic Chern classes basis and present the formula for this action in full generality as well as for some special cases. Chapter \ref{Conclusions} points out some problems and possible ways of further research in this field. \\ \\
	{\em Acknowledgements.} I would like to give a special thanks to Andrzej Weber for his support, guidance and valuable discussions, and Magdalena Zielenkiewicz for her remarks, comments and overall help in the editing process.
	\section{Preliminaries} \label{preliminaries}
	\subsection{Equivariant cohomology and equivariant $K$-theory} \label{preliminaries-1}
	Let $X$ be a quasi-projective complex algebraic variety. Let $H_*(Y)$ denote the Borel-Moore homology group and $H^*(X)$ the cohomology ring, both with rational coefficients. Every subvariety $Y \subseteq X$ of complex dimension $k$ defines \emph{a fundamental class} $[Y] \in H_{2k}(X)$ (\cite[Appendix B.3]{Ful}). For $X$ smooth we identify the Borel-Moore homology and cohomology via the Poincare duality. \\ \\
	Suppose a torus $T = (\mathbb{C}^*)^n$ acts on the variety $X$. \emph{The $T$-equivariant cohomology ring of $X$} is defined as the cohomology of the quotient $H_{T}^*(X) = H^*(ET \times^T X, \mathbb{Q})$ and $ET$ is the total space of the universal $T$-bundle. This ring is an algebra over the ring $H^*_T(pt)$, which is equal to
	\begin{equation*}
		H^*_T(pt) = H^*(ET/T, \mathbb{Q}) = H^*((\mathbb{P}^{\infty})^n, \mathbb{Q}) = \mathbb{Q}[t_1, \ldots, t_n].
	\end{equation*}
	Moreover, this ring can be identified with the polynomial ring $Sym_\mathbb{Q}\Xi(T)$ via the map $\lambda \to c_1(\mathcal{K}(\lambda))$, where $\Xi(T)$ is the group of characters of $T$ and $\mathcal{K}(\lambda)$ is the linear bundle $ET \times^T \mathbb{C} \to ET/T$. Given a $T$-invariant subvariety $Y$ we define its fundamental class $[Y]_T$ as the class of the subvariety $E_n \times^T Y$ of $E_n \times^T X$ for sufficiently large $n$, where $E_n$ is an approximation of the universal $T$-bundle. \\ \\
	We can consider three $K$-theories on $X$ - the $K$-theory of coherent equivariant sheaves, the $K$-theory of locally free equivariant sheaves and the $K$-theory of equivariant topological vector bundles. If $X$ is smooth, then the first and the second are isomorphic (\cite[Theorem 5.7]{Tho}). Moreover, if $X$ is a finite union of orbits, the natural map (forgetting the structure of a vector bundle) is an isomorphism between the second and the third (\cite[Theorem 9.1]{FRW}). In this case, we denote by $K_T(X)$ \emph{the equivariant $K$-theory ring of $X$}. $K_T(X)$ is an algebra over $K_T(pt)$, which is equal to $R(T) \simeq \mathbb{Z}[\Xi(T)]$.
	\subsection{Flag varieties, Schubert varieties and their classes} \label{preliminaries-2}
	Let $G$ be a connected, complex, semi-simple Lie group, $B_+$ a fixed Borel subgroup, $B_-$ the opposite Borel subgroup and $T \subseteq B_+$ a maximal torus. From now on, we denote $B_+$ by $B$. Let $W = N(T)/T$ be the Weyl group and $l\dd W \to \mathbb{N}$ the length function. Let $\alpha_1, \ldots, \alpha_n$ denote the associated set of simple roots. We denote by $s_i$ the reflection in the root $\alpha_i$ and by $P_i$ the minimal parabolic subgroup corresponding to the simple root $\alpha_i$. \\ \\
	The quotient $X := G/B$ is called \emph{the full flag variety} for $G$ and $B$ defined as above. The isomorphism $G/B \simeq G_{comp}/G_{comp}\cap T$, where $G_{comp}$ is a suitably chosen maximal compact subgroup of $G$, implies that the Weyl group acts on $G/B$ by right multiplication. The foundation of the Schubert calculus is a well known partition called the Bruhat decomposition (\cite[Section 14.12]{Bor}),
	\begin{equation*}
		G/B = \bigsqcup_{\omega \in W} B\tilde{\omega} B / B
	\end{equation*}
	for $\tilde{\omega}$ being a representative of $\omega \in NT/T$ in $NT$. The \emph{Schubert cell $X_\omega^\circ$} is defined as $B\tilde{\omega} B / B$. The \emph{Schubert varieties $X_\omega$} are defined as $\overline{B\tilde{\omega} B / B}$ (closure in Zariski topology). Because these varieties are $T$-invariant, one can take their fundamental classes $[X_\omega]_T$, and these classes form a $H_T^*(pt)$-basis of $H_T^*(G/B)$. The situation is similar when we study $K_T(G/B)$. The structure sheaves $\mathcal{O}_\omega$ of $X_\omega$ (which are $T$-equivariant) define classes $\oo$ in $K_T(G/B)$ that form a $K_T(pt)$-basis of $K_T(G/B)$.
	\subsection{Equivariant cohomology and equivariant $K$-theory of flag varieties} \label{preliminaries-3}
	\begin{defn}
		\emph{The Berstain-Gelfand-Gelfand operator $\partial_i^{coh}: H^*_T(G/B)  \to H^*_T(G/B)$} is defined as
		\begin{equation*}
			\partial_i^{coh}(u) = p_i^*p_{i*}(u),
		\end{equation*}
		where $p_i\dd G/B \to G/P_i$ is the projection and $u$ is an element of $H^*_T(G/B)$.
	\end{defn}
	The Demazure operators $\partial_i^K$ on $K_T(G/B)$ were defined in \cite{Dem}. They are analogs of the Bernstain-Gelfand-Gelfand operators.
	\begin{defn} \label{Definition - Demazure operator}
		\emph{The Demazure operator $\partial_i^K: K_T(G/B) \to K_T(G/B)$} is defined as
		\begin{equation*}
			\partial_i^K([\mathcal{U}]) = p_i^*p_{i*}([\mathcal{U}]),
		\end{equation*}
		where $p_i: G/B \to G/P_i$ is the projection and $[\mathcal{U}]$ is an element of $K_T(G/B)$.
	\end{defn}
	The Berstain-Gelfand-Gelfand operator and the Demazure operator satisfy the following.
	\begin{stw} [{\cite[Theorem 3.12]{BGG}}]  \label{Proposition - BGG operator}
		\begin{equation*} 
			\partial_i^{coh}([X_\omega]) = \begin{cases*}
				[X_{\omega s_i}]  & \text{if $l(\omega s_i) > l(\omega)$}, \\
				0 & \text{otherwise.}
			\end{cases*}
		\end{equation*}
	\end{stw}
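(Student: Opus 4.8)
The plan is to prove the formula geometrically, analyzing the projection $p_i\dd G/B \to G/P_i$ directly rather than through localization. The essential observation is that the fiber of $p_i$ is $P_i/B \simeq \mathbb{P}^1$, so $p_i$ is a Zariski-locally trivial $\mathbb{P}^1$-bundle; consequently $p_{i*}$ is proper of relative dimension one and lowers complex dimension by one, while $p_i^*$ preserves it, so that the composition $\partial_i^{coh}=p_i^*p_{i*}$ preserves homological degree. Everything below is carried out $T$-equivariantly, but since the fibration and all the subvarieties involved are $T$-invariant, the equivariant statement follows from the same geometry used in the nonequivariant setting.

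First I would record the behavior of $X_\omega$ relative to the fibers of $p_i$, which is governed entirely by the sign of $l(\omega s_i)-l(\omega)$. In each coset $\omega W_{P_i}=\{\omega,\omega s_i\}$ exactly one element has minimal length, and $p_i(X_\omega)$ is the Schubert variety $X^{P_i}_{\omega}\subseteq G/P_i$ attached to that representative. When $l(\omega s_i)>l(\omega)$, so $\omega$ is minimal, the restriction $p_i|_{X_\omega}$ is generically one-to-one onto $X^{P_i}_\omega$ (hence $\dim p_i(X_\omega)=\dim X_\omega=l(\omega)$), while the full preimage satisfies $p_i^{-1}\!\left(X^{P_i}_\omega\right)=X_{\omega s_i}$, a Schubert variety of dimension $l(\omega)+1=l(\omega s_i)$. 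When $l(\omega s_i)<l(\omega)$, so $\omega s_i$ is minimal, $X_\omega$ is \emph{saturated} for $p_i$: it is a union of full fibers, so $p_i|_{X_\omega}$ has one-dimensional fibers and $\dim p_i(X_\omega)=l(\omega)-1$.

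With this dichotomy the computation is immediate. If $l(\omega s_i)>l(\omega)$, birationality gives $p_{i*}[X_\omega]=[X^{P_i}_\omega]$, and flat pullback along the $\mathbb{P}^1$-bundle gives $p_i^*[X^{P_i}_\omega]=\left[p_i^{-1}(X^{P_i}_\omega)\right]=[X_{\omega s_i}]$, which is the first case of the formula. If $l(\omega s_i)<l(\omega)$, the positive-dimensional fibers force $\dim p_i(X_\omega)<\dim X_\omega$, hence $p_{i*}[X_\omega]=0$ and $\partial_i^{coh}[X_\omega]=p_i^*(0)=0$, giving the second case.

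I expect the main obstacle to be the precise geometric input of the second paragraph: verifying that $p_i|_{X_\omega}$ is genuinely birational in the ascending case, and that the preimage $p_i^{-1}(X^{P_i}_\omega)$ equals $X_{\omega s_i}$ rather than merely having the right dimension. Both facts reduce, after restricting attention to a single minimal parabolic, to an explicit analysis of the orbit $P_i\tilde{\omega}B/B$ and the way the Bruhat cells of $G/B$ fiber over those of $G/P_i$; this is where the combinatorics of the Weyl group and the geometry of the $\mathbb{P}^1$-bundle must be matched up carefully.
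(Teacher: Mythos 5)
Your argument is sound, but there is nothing in the paper to compare it against: Proposition \ref{Proposition - BGG operator} is quoted from Bernstein--Gelfand--Gelfand without proof, so any proof you supply is necessarily "a different route." What you give is the standard geometric argument via the $\mathbb{P}^1$-fibration $p_i\colon G/B\to G/P_i$, and its skeleton is correct: proper pushforward kills the class of a subvariety whose image drops dimension, is the identity on classes of subvarieties mapped birationally, and flat pullback of the class of a subvariety along the $\mathbb{P}^1$-bundle is the class of its (reduced, irreducible) preimage. The two facts you flag as the real work are exactly the content of the standard double-coset computation $B\widetilde{\omega}P_i=B\widetilde{\omega}B\cup B\widetilde{\omega s_i}B$ together with the length dichotomy on the coset $\{\omega,\omega s_i\}$: when $l(\omega s_i)>l(\omega)$ the cell $B\widetilde{\omega}B/B$ maps bijectively onto its image in $G/P_i$, so $p_i|_{X_\omega}$ is birational and $p_i^{-1}(p_i(X_\omega))=\overline{B\widetilde{\omega}P_i/B}$ is irreducible of dimension $l(\omega)+1$ containing the cell of $\omega s_i$, hence equals $X_{\omega s_i}$; when $l(\omega s_i)<l(\omega)$ one gets $B\widetilde{\omega}P_i/B\subseteq X_\omega$, so $X_\omega$ is saturated and its image has dimension $l(\omega)-1$, forcing $p_{i*}[X_\omega]=0$. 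These are standard facts about the Bruhat decomposition relative to a minimal parabolic, so the gap you identify is fillable by citation rather than by new ideas, and the equivariant upgrade does go through as you claim, since the finite-dimensional approximations $E_n\times^T(G/B)\to E_n\times^T(G/P_i)$ are again $\mathbb{P}^1$-bundles carrying the subvarieties $E_n\times^T X_\omega$ that define the equivariant fundamental classes in Section \ref{preliminaries-1}. One small bookkeeping remark: proper pushforward preserves the dimension of cycles (it is the cohomological degree that drops by two because the target has smaller dimension), while the flat pullback raises cycle dimension by one; your phrasing in the first paragraph inverts this, but it has no effect on the computation.
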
 \begin{stw} [{\cite[Theorem 2]{Dem}}] \label{Proposition - Demazure operator}
		\begin{equation*}
			\partial_i^K([\mathcal{O}_\omega]) = \begin{cases*}
				[\mathcal{O}_{\omega s_i}]  & \text{if $l(\omega s_i) > l(\omega)$}, \\
				[\mathcal{O}_\omega] & \text{otherwise.}
			\end{cases*}
		\end{equation*}
	\end{stw}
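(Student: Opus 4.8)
The plan is to work directly with the projection $p_i\colon G/B \to G/P_i$, which is a Zariski-locally trivial $\mathbb{P}^1$-bundle since its fibers are $P_i/B \cong \mathbb{P}^1$, and to determine how the Schubert variety $X_\omega$ sits inside this fibration in each of the two cases. In particular $p_i$ is flat and proper, so $p_i^*$ is exact on coherent sheaves and $p_{i*}[\mathcal{F}] = \sum_j (-1)^j [R^j p_{i*}\mathcal{F}]$.

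First I would record the relevant geometry coming from the Bruhat decomposition. Because $P_i = B \sqcup B s_i B$, the image $p_i(X_\omega)$ is a Schubert variety $Y \subseteq G/P_i$ and its preimage $p_i^{-1}(Y)$ is a $P_i$-stable Schubert variety in $G/B$. Applying the $BN$-pair multiplication rule to $\overline{B\omega B}\cdot\overline{Bs_iB}$ separates the two cases: if $l(\omega s_i) > l(\omega)$ then $p_i^{-1}(Y) = X_{\omega s_i}$ while $p_i|_{X_\omega}\colon X_\omega \to Y$ is birational (both sides have dimension $l(\omega)$ and the open cell maps isomorphically); if $l(\omega s_i) < l(\omega)$ then $X_\omega$ is itself $P_i$-stable, so $p_i^{-1}(Y) = X_\omega$ and $p_i|_{X_\omega}$ is a $\mathbb{P}^1$-fibration onto $Y$.

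The case $l(\omega s_i) < l(\omega)$ should be straightforward. Here $P_i$-stability gives $X_\omega = p_i^{-1}(Y)$, hence by flatness $\mathcal{O}_{X_\omega} = p_i^*\mathcal{O}_Y$ and $[\mathcal{O}_\omega] = p_i^*[\mathcal{O}_Y]$. Since $p_{i*}[\mathcal{O}_{G/B}] = [\mathcal{O}_{G/P_i}]$ (because $H^0(\mathbb{P}^1,\mathcal{O}) = \mathbb{C}$ and $H^1(\mathbb{P}^1,\mathcal{O}) = 0$), the projection formula gives $p_{i*}[\mathcal{O}_\omega] = [\mathcal{O}_Y]\cdot p_{i*}[\mathcal{O}_{G/B}] = [\mathcal{O}_Y]$, and applying $p_i^*$ returns $[\mathcal{O}_\omega]$; thus $\partial_i^K[\mathcal{O}_\omega] = [\mathcal{O}_\omega]$. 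In the case $l(\omega s_i) > l(\omega)$ the same flatness argument gives $[\mathcal{O}_{\omega s_i}] = p_i^*[\mathcal{O}_Y]$, so it is enough to show $p_{i*}[\mathcal{O}_\omega] = [\mathcal{O}_Y]$; then $\partial_i^K[\mathcal{O}_\omega] = p_i^*[\mathcal{O}_Y] = [\mathcal{O}_{\omega s_i}]$.

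The hard part will be exactly this last reduction, namely the sheaf-theoretic identity $Rp_{i*}\mathcal{O}_{X_\omega} = \mathcal{O}_Y$, that is, $p_{i*}\mathcal{O}_{X_\omega} = \mathcal{O}_Y$ together with $R^{>0}p_{i*}\mathcal{O}_{X_\omega} = 0$. The first equality follows from the birationality of $p_i|_{X_\omega}$ and the normality of the target Schubert variety $Y$ via Zariski's main theorem; the vanishing of the higher direct images is the genuinely nontrivial input. I would deduce it from the fact that Schubert varieties have rational singularities: choosing a Bott--Samelson (Demazure) desingularization $\rho\colon Z \to X_\omega$ one has $R\rho_*\mathcal{O}_Z = \mathcal{O}_{X_\omega}$, while the composite $p_i\circ\rho\colon Z \to Y$ is a resolution of the Schubert variety $Y$, so rational singularities of $Y$ give $R(p_i\circ\rho)_*\mathcal{O}_Z = \mathcal{O}_Y$; the composition (Grothendieck) spectral sequence then forces $Rp_{i*}\mathcal{O}_{X_\omega} = \mathcal{O}_Y$. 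Granting this, the two cases combine to yield the asserted formula.
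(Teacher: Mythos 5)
The paper does not prove this proposition at all: it is quoted verbatim from Demazure (\cite[Theorem 2]{Dem}) and used as a black box, so there is no internal proof to compare against. Your argument is correct and is essentially the standard geometric proof of Demazure's formula. The case analysis via the Bruhat-cell computation $\overline{B\omega B}\cdot P_i$ is right: for $l(\omega s_i)<l(\omega)$ the variety $X_\omega$ is $P_i$-stable, flat base change gives $[\mathcal{O}_\omega]=p_i^*[\mathcal{O}_Y]$, and the projection formula together with $p_{i*}[\mathcal{O}_{G/B}]=[\mathcal{O}_{G/P_i}]$ closes that case; for $l(\omega s_i)>l(\omega)$ the reduction to $Rp_{i*}\mathcal{O}_{X_\omega}=\mathcal{O}_Y$ and the resolution of it via a Bott--Samelson desingularization plus the composition of derived pushforwards is exactly the right mechanism. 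The one thing you should make explicit is that your proof is not self-contained: it leans on normality and rational singularities of Schubert varieties (in $G/B$ and in $G/P_i$), which are substantial theorems (Ramanan--Ramanathan, Mehta--Srinivas, Andersen) rather than formal facts, and historically the gap in Demazure's original argument sat precisely at the normality step. You should also record that every map and sheaf in sight is $B$-equivariant, hence $T$-equivariant, so the identities indeed hold in $K_T(G/B)$ and not merely in ordinary $K$-theory. With those inputs acknowledged as citations, the proof is complete.
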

	Let us denote the ring $H^*_T(pt)$ by $S$ and take an element $\lambda \in \Xi(T)$. This element can be extended to a character $\tilde{\lambda}$ of $B$ by composing it with the projection $B \to T$. One can take a line bundle $\mathcal{L}(\lambda) = G \times \mathbb{C}/B = G \times \mathbb{C}/(gb, z) \sim (g, \tilde{\lambda}(b) z)$ over $G/B$ which is $G$-equivariant, hence $T$-equivariant. Furthermore, there is a morphism 
	\begin{align*}
		\Xi(T) &\to H^*_T(X) \\
		\lambda &\to c_1^T(\mathcal{L}(\lambda)).
	\end{align*}
	This assignment induces a morphism of rings $c^T\dd S \to H_T^*(G/B)$.
	\begin{lem} \cite[Section 1.3]{AS} \label{Lemma - Borel presentation of cohomology}
		There is an isomorphism called the Borel presentation of $H^*_T(G/B)$
		\begin{equation*}
			S \otimes_{S^W} S \to H^*_T(G/B)
		\end{equation*}
		defined as $f \otimes g \to f \cdot c^T(g)$.
	\end{lem}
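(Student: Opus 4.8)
The plan is to prove the claimed isomorphism $\phi\dd S \otimes_{S^W} S \to H^*_T(G/B)$ by restricting both sides to the $T$-fixed points of $G/B$ and then upgrading injectivity to bijectivity by a Hilbert-series count. Write $Q = \mathrm{Frac}(S)$ and recall that the $T$-fixed points of $G/B$ are exactly the $|W|$ points $\omega B$, $\omega \in W$. First I would record the behaviour of the two structure maps under the localization (restriction) homomorphism $\mathrm{res}\dd H^*_T(G/B) \to \bigoplus_{\omega \in W} S$: a class pulled back from a point is constant, so $f \in S$ restricts to $f$ at every fixed point, whereas $\mathcal{L}(g)$ restricts at $\omega B$ to the character $\omega(g)$, giving $c^T(g)|_{\omega B} = \omega(g)$. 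Consequently the composite $\psi = \mathrm{res} \circ \phi$ is the explicit map $f \otimes g \mapsto (f\cdot \omega(g))_{\omega \in W}$, and it is manifestly well defined on the tensor product over $S^W$, since invariant elements may be moved across the tensor sign.

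Next I would establish injectivity of $\phi$. Since $\psi = \mathrm{res}\circ\phi$ factors through $\phi$, injectivity of $\psi$ immediately forces injectivity of $\phi$, so it suffices to treat $\psi$. Because $S \otimes_{S^W} S$ is free — hence torsion-free — over the first copy of $S$ (here I invoke Chevalley's theorem, that $S$ is a free $S^W$-module of rank $|W|$), it is enough to check that $\psi$ becomes an isomorphism after inverting the nonzero elements of $S$. Over $Q$ this reads $Q \otimes_{S^W} S \xrightarrow{\sim} \prod_{\omega} Q$, $f \otimes g \mapsto (f\,\omega(g))_\omega$, which is precisely the Galois decomposition of $Q = \mathrm{Frac}(S)$ over $\mathrm{Frac}(S^W)$ with group $W$ (equivalently, Dedekind's independence of the automorphisms $\omega$). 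Hence $\psi$, and with it $\phi$, is injective.

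Finally I would promote injectivity to an isomorphism by a graded dimension count. The map $\phi$ is a homomorphism of graded $S$-modules, so it suffices to compare Hilbert series. On the target, the Schubert basis $\{[X_\omega]\}_{\omega \in W}$ realizes $H^*_T(G/B)$ as a free graded $S$-module with one generator in cohomological degree $2\ell(\omega)$ for each $\omega$. On the source, Chevalley's theorem supplies a homogeneous $S^W$-basis of the second copy of $S$ whose degrees are distributed exactly as $\{2\ell(\omega)\}_{\omega \in W}$ — the Hilbert series of the coinvariant algebra $S \otimes_{S^W} \mathbb{Q}$ equals $\sum_{\omega} q^{2\ell(\omega)}$ — so $S \otimes_{S^W} S$ is likewise free over the first $S$ with generators in the same degrees. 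The two graded $S$-modules therefore share the same Hilbert series, and an injective degree-preserving map between graded modules with equal finite-dimensional graded pieces is bijective in every degree.

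I expect the main obstacle to lie in Steps 1 and 3: pinning down the precise convention under which $c^T(g)$ restricts to $\omega(g)$ (so that $\psi$ is the genuine Galois map rather than a twist of it), and verifying that the degrees in a homogeneous $S^W$-basis of $S$ coincide with the Schubert degrees $\{2\ell(\omega)\}$. Once these are settled, the Galois-descent injectivity and the Hilbert-series comparison close the argument, and no evaluation of $\phi$ on individual classes is required.
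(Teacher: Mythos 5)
The paper offers no proof of this lemma---it is imported verbatim from \cite[Section 1.3]{AS}---so there is no in-text argument to measure you against. Your localization-plus-Hilbert-series proof is one of the standard routes to the Borel presentation and is essentially correct: reducing injectivity to the generic point via freeness of $S$ over $S^W$ and the Galois decomposition $Q \otimes_{Q^W} Q \cong \prod_{\omega \in W} Q$ is sound (any ambiguity in the sign or inverse in $c^T(g)|_{\omega B} = \omega(g)$ only permutes or twists the factors and does not affect the argument), and the surjectivity count works because $H_T^*(G/B)$ is $S$-free on the Schubert classes, whose codimension degrees $2(\ell(\omega_0)-\ell(\omega))$ form the same multiset as $\{2\ell(\omega)\}$, while the coinvariant algebra has Poincar\'e polynomial $\sum_{\omega} q^{2\ell(\omega)}$ by Chevalley--Solomon.

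The one step you dismiss too quickly is the well-definedness of $\phi$ itself. $S^W$-balancedness of $(f,g) \mapsto f \cdot c^T(g)$ amounts to the identity $\pi^*(h) = c^T(h)$ for $h \in S^W$, where $\pi^*$ denotes the $S$-module structure map of $H_T^*(G/B)$; this is not formal, because $\pi^*$ and $c^T$ are genuinely different ring homomorphisms $S \to H_T^*(G/B)$ (the first sends $\lambda$ to the Chern class of the constant equivariant line bundle with $T$ acting by $\lambda$, the second to $c_1^T(\mathcal{L}(\lambda))$, and these classes differ for general $\lambda$). Your remark that invariants ``may be moved across the tensor sign'' literally verifies balancedness only for $\psi = \mathrm{res} \circ \phi$, whose target is a product of copies of $S$. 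The repair is one line and uses tools you already invoke: the restriction $\mathrm{res}$ is injective (its kernel is $S$-torsion by the localization theorem, and $H_T^*(G/B)$ is $S$-free on the Schubert basis), so balancedness of $\psi$ forces balancedness of the bilinear map into $H_T^*(G/B)$, and hence $\phi$ is well defined. With that sentence inserted, your argument is complete.
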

	The map $\gamma: R(T) \to K_T(G/B)$ induced by the assignment $\mathbb{Z} [\Xi(T)] \supseteq \Xi(T) \ni \lambda \to [\mathcal{L}(\lambda)] \in K_T(G/B)$, leads to the following lemma called \emph{the Borel's presentation} of $K_T(G/B)$.
	\begin{lem}  \cite[Theorem 4.4]{HLS} \label{Lemma - Borel presentation of K-theory}
		There is an isomorphism
		\begin{equation*}
			R(T) \otimes_{R(T)^W} R(T) \to K_T(G/B)
		\end{equation*}
		defined as $f \otimes g \to f \cdot \gamma(g)$.
	\end{lem}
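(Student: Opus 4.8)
The plan is to present the map as a surjective homomorphism of free $R(T)$-modules of the same finite rank $|W|$, and then to use that a surjection between free modules of equal finite rank over a commutative ring is automatically an isomorphism. First I would check that $f \otimes g \mapsto f \cdot \gamma(g)$ is well defined over $R(T)^W$: the only relation to respect is $fh \otimes g = f \otimes hg$ for $h \in R(T)^W$, that is $(fh)\cdot \gamma(g) = f \cdot \gamma(hg)$, which amounts to $\gamma(h) = \pi^*(h)$ for the projection $\pi : G/B \to pt$ and $R(T) = K_T(pt)$. This identity holds because a $W$-invariant class is the restriction of an honest $G$-representation (Chevalley's isomorphism $R(G) \cong R(T)^W$) and the bundle associated to a $G$-representation is pulled back from the point; hence $\gamma$ restricted to $R(T)^W$ is the structure map, the relation is respected, and the map is a homomorphism of $R(T)$-algebras, with scalars acting through the left-hand factor.

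Next I would compare ranks. The target $K_T(G/B)$ is free over $R(T) = K_T(pt)$ of rank $|W|$, with basis the structure sheaf classes $\{[\mathcal{O}_\omega]\}_{\omega \in W}$ of Section \ref{preliminaries-2}. For the source I would invoke the Pittie--Steinberg theorem, which states that $R(T)$ is free over $R(T)^W$ of rank $|W|$ (the rank being forced by $[\operatorname{Frac} R(T) : \operatorname{Frac} R(T)^W] = |W|$). Fixing an $R(T)^W$-basis $b_1, \dots, b_{|W|}$ of the right-hand factor, the elements $1 \otimes b_1, \dots, 1 \otimes b_{|W|}$ form an $R(T)$-basis of $R(T) \otimes_{R(T)^W} R(T)$, so the source is free of rank $|W|$ as well.

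The decisive step is surjectivity, i.e. that the subalgebra $A \subseteq K_T(G/B)$ generated over $R(T)$ by the line bundle classes $\gamma(\lambda) = [\mathcal{L}(\lambda)]$ is the whole ring. I would generate the Schubert basis from the base point: the class of the base point lies in $A$, since $[\mathcal{O}_{id}] = \prod_{\alpha > 0}(1 - [\mathcal{L}(-\alpha)])$ is the $K$-theoretic Euler class of the tangent space at $eB$, a polynomial in line bundles; and by Proposition \ref{Proposition - Demazure operator} every $[\mathcal{O}_\omega]$ equals $\partial_{i_k}^K \cdots \partial_{i_1}^K [\mathcal{O}_{id}]$ for a reduced word $\omega = s_{i_1} \cdots s_{i_k}$. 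It then suffices to check that each Demazure operator carries $A$ into itself: on a line bundle class $\partial_i^K$ reproduces the classical Demazure operator $e^\lambda \mapsto (e^\lambda - e^{-\alpha_i} e^{s_i \lambda})/(1 - e^{-\alpha_i})$ on characters, whose value again lies in $\gamma(R(T))$, while $\partial_i^K$ obeys a twisted Leibniz rule whose only non-$R(T)$-linear ingredient is the automorphism $s_i^K$, and $s_i^K$ preserves $A$ because it sends $[\mathcal{L}(\lambda)]$ to $[\mathcal{L}(s_i\lambda)]$. Granting this, the map is a surjection of free $R(T)$-modules of equal rank $|W|$; since $R(T) = \mathbb{Z}[\Xi(T)]$ is a domain the target is projective, the surjection splits, and the kernel is a finitely generated projective module of generic rank $0$, hence zero, so the map is an isomorphism.

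I expect surjectivity to be the main obstacle, since the generation of $K_T(G/B)$ by line bundles is precisely where the geometry of $G/B$ enters, while the remaining steps are formal. As a fallback I would instead pass to the associated graded for the codimension-of-support filtration, which is finite because $\dim G/B < \infty$, identify the graded map with the cohomological Borel presentation of Lemma \ref{Lemma - Borel presentation of cohomology} --- already known to be an isomorphism --- and deduce the result for the original map.
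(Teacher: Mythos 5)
The paper offers no proof of this lemma --- it is imported verbatim as \cite[Theorem 4.4]{HLS} --- so there is nothing internal to compare against; what matters is whether your argument stands on its own. Its skeleton (well-definedness via $R(G)\cong R(T)^W$ and the triviality of bundles associated to $G$-representations; equality of ranks via Pittie--Steinberg on the source and the Schubert basis $\{[\mathcal{O}_\omega]\}$ on the target; surjectivity by generating the Schubert classes from the point class with Demazure operators; and the formal ``surjection of free modules of equal finite rank over a domain is an isomorphism'' conclusion) is the standard route and is essentially sound, with the caveat that everything from Pittie--Steinberg onward requires $G$ simply connected --- otherwise $R(T)$ need not be free over $R(T)^W$ and $R(G)\to R(T)^W$ need not be onto. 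The paper only assumes $G$ semisimple, so this hypothesis should be made explicit.

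The genuine gap sits exactly where you predicted it would: the identity $[\mathcal{O}_{\id}]=\prod_{\alpha>0}\bigl(1-[\mathcal{L}(-\alpha)]\bigr)$ is false, and it is the one step carrying the geometric content of surjectivity. The class of the point $eB$ must restrict to $0$ at every other $T$-fixed point $vB$, whereas $\prod_{\alpha>0}\bigl(1-[\mathcal{L}(-\alpha)]\bigr)$ restricts at $vB$ to $\prod_{\alpha>0}\bigl(1-e^{-v\alpha}\bigr)$, which is nonzero for every $v\in W$. Already for $SL_2/B=\mathbb{P}^1$ the restriction table in Chapter \ref{Chevalley formula} gives $\restr{[\mathcal{O}_{\id}]}{0}=1-e^{\alpha_1}$ and $\restr{[\mathcal{O}_{\id}]}{\infty}=0$, while $1-[\mathcal{L}(-\alpha_1)]$ restricts to $1-e^{-\alpha_1}$ and $1-e^{\alpha_1}$; the correct expression there is $1-e^{\varpi_1}[\mathcal{L}(\varpi_1)]$ with $\varpi_1=\alpha_1/2$ the fundamental weight (again visible only for simply connected $G$). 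Conceptually, a Koszul resolution of $\mathcal{O}_{eB}$ needs a global rank-$\dim G/B$ bundle with a section vanishing transversally at $eB$ alone; the (co)tangent bundle of $G/B$ admits no such equivariant section, and what your product actually computes is $\lambda_{-1}$ of a sum of line bundles, i.e.\ (up to the sign convention on roots) the sum $\sum_{w\in W}[\mathcal{O}_{e_w}]$ of \emph{all} fixed-point classes rather than a single one. So surjectivity is not established. It can be repaired --- the standard device is the Leray--Hirsch induction along the $\mathbb{P}^1$-bundles $G/B\to G/P_i$, showing $K_T(G/B)$ is generated over $K_T(G/P_i)$ by a single line bundle class and descending to $K_T(pt)$ --- but some such input must replace the false identity. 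Your fallback (passing to the associated graded of the coniveau filtration and quoting Lemma \ref{Lemma - Borel presentation of cohomology}) does not substitute for it either: $\operatorname{gr}K_T(G/B)$ is a module over $R(T)$, not over $S=H_T^*(pt)$, and the comparison with equivariant cohomology only holds after completing $R(T)$ at the augmentation ideal, so the graded map is not the cohomological Borel presentation.
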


	\subsection{Motivic Chern classes} \label{preliminaries-4}
	In this section, we restrict our attention only to equivariant motivic Chern classes of Schubert cells. For a general definition see e.g. \cite{FRW}. We define the equivariant motivic Chern class of Schubert cell $X_\omega^\circ$ in equivariant $K$-theory as follows.
	\begin{defn} \label{Definition - operator T_i}
		\emph{The equivariant motivic Chern class of Schubert's cell $X_\omega^\circ$} for the reduced presentation $s_{i_1} s_{i_2} \ldots s_{i_n}$ of $\omega$ is defined as
		\begin{equation*}
			\mC_y(X_\omega^\circ) = \mathcal{T}_{i_1} \mathcal{T}_{i_2} \ldots \mathcal{T}_{i_n} (\oo[\id]),
		\end{equation*}
		where \emph{the $K$-theoretic Demazure-Lusztig operator $\mathcal{T}_i$} on $K_T^*(G/B)[y]$ is defined as
		\begin{equation*}
			\mathcal{T}_i = \partial_i^K + y \llll \partial_i^K - \id.
		\end{equation*}
	\end{defn}
	\begin{uwg} [{\cite[Proposition 3.4, Remark 5.4]{AMSS2}}]
		The above defintion does not depend on the choice of presentation of element $\omega$.
	\end{uwg}
	The following equality holds (\cite[Corollary 5.2]{AMSS2}):
	\begin{equation*}
		\mathcal{T}_i(\mC_y(X_\omega^\circ)) = 
		\begin{dcases*}
			\mC_y(X_{\omega s_i}^\circ) & $\text{ if } l(\omega s_i) > l(\omega)$ \\
			-(y + 1) \mC_y(X_\omega^\circ) - y \mC_y(X_{\omega s_i}^\circ) & \text{ otherwise.}
		\end{dcases*}
	\end{equation*}
	These classes depend on $y$ and after setting $y = const$ form the basis of $K_T^*(G/B)$ localized in the ideal generated by the elements of the form $(1 + ye^{-\alpha_i})$. Two important special cases are $y = -1$ and $y = 0$. In the first case these classes form a \emph{fixed point basis} defined as $\oo[e_\omega]$ where $e_\omega := \widetilde{\omega} B \in (G/B)^T$ is the fixed point induced by the element $\omega \in W$ (\cite[Remark 5.3, Lemma 3.8]{AMSS2}). In the second case the class $\mC_0(X_\omega^\circ)$ is equal to \emph{the ideal sheaf of the boundary of the Schubert variety $X_\omega$ defined as $\mathcal{I}_\omega = \oo - \oo[\partial X_\omega]$} (\cite[Lemma 3.8]{AMSS2}). We will discuss the automorphism $s_i$ in these bases in Chapter \ref{Properties of action of Weyl group in MC basis}.
	\section{Action of Weyl group} \label{Action of Weyl group}
	\begin{uwg}
		Let us treat $\Xi(T)$ as a lattice in $\mathfrak{h}^*$, where $\mathfrak{h}$ is the Cartan subalgebra of the Lie algebra of $G$. By $(-,-)$ we denote an inner product induced by the Killing form. Moreover, we define \emph{the coroot $\beta^\vee$} of the root $\beta$ as $2 \frac{\beta}{(\beta, \beta)}$. We use the notation $\langle \alpha, \beta^\vee \rangle := 2 \frac{(\alpha, \beta)}{(\beta, \beta)}$.
	\end{uwg}
	For every element $\omega \in W$, we choose its representative $\widetilde{\omega}$ in $N(T)$. We define $m_\omega \in \Aut(G_{comp}/(G_{comp}\cap T))$ as a right hand side multiplication by $\widetilde{\omega}$. This automorphism, which is $T$-equivariant, combined with a homeomorphism $G_{comp}/(G_{comp}\cap T) \approx G/B$, induces (by pullback) automorphism $\omega^{coh} \in  \Aut(H^*_T(G/B))$ (\cite[Section 3.1]{MNS2}) and $\omega^K \in \Aut(K^*_T(G/B))$ (\cite[Section 5.1]{MNS2}). In next two sections, we will prove formulas for both of these actions for $\omega = s_i$.
	\begin{uwg}
		There are other actions of $W$ on the cohomology ring and $K$-theory ring. Instead of right hand side multiplication, one can take the left hand side multiplication. The induced action is completly described in \cite{MNS2}. Further, instead of taking pullback above, one can take the pushforward map, but we can easily switch from one case to another, because pullback is adjoint to pushforward. 
	\end{uwg}
	\subsection{Equivariant cohomology} \label{Action of Weyl group - 1}
	Let $\omega^S\dd S \to S$ be the automorphism acting on the generators of $S$ in the same manner as $\omega$ acts on the set of simple roots.
	\begin{defn} \label{Definition - divided difference operator}
		\emph{The divided difference operator $\widetilde{\partial}_i^{coh}\dd S \to S$} is defined as
		\begin{equation*}
			\widetilde{\partial}_i^{coh}(f) = \frac{f - s_i^S(f)}{-\alpha_i}.
		\end{equation*}
	\end{defn}
	\begin{uwg}
		This operator is sometimes defined as $\frac{f - s_i^S(f)}{\alpha_i}$, but for the flag variety $G/B_-$. By changing $\alpha_i$ for $-\alpha_i$, we translate this definition to our case.
	\end{uwg}
	\begin{uwg} \label{Remark - field of fraction}
		The expression $\frac{f - s_i^S(f)}{-\alpha_i}$ formally belongs to a field of fractions, but we know that $f - s_i^S(f)$ is divisible by $\alpha_i$.
	\end{uwg}
	The Bernstain-Gelfand-Gelfand operator is closely related to the divided difference operator. The relation between them is given by the isomorphism from Lemma \ref{Lemma - Borel presentation of cohomology} 
	\begin{equation*}
		\Phi: S \otimes _{S^W} S \to H_T^*(G/B), 
	\end{equation*} 
	which satisfies the following (\cite{Ara}, \cite{BGG} for non-equivariant case):
	\begin{equation*}
		\Phi \circ (\id \otimes \widetilde{\partial}_i^{coh}) = \partial_i^{coh} \circ \Phi.
	\end{equation*}
	Using that fact, we can rewrite the Definition \ref{Definition - divided difference operator}:
	\begin{align*}
		s_i^S &= \id + \alpha_i \widetilde{\partial}_i^{coh} \\
		\Phi \circ (\id \otimes s_i^S) &= \Phi + c_1(\llll) \partial_i^{coh} \circ \Phi \\
		\Phi \circ (\id \otimes s_i^S) \circ \Phi^{-1} &= \id + c_1(\llll) \partial_i^{coh}
	\end{align*}
	We give a topological proof of the following key formula (another proof of it is shown in \cite{Knu}).
	\begin{lem} \label{Lemma - formula for s_i^{coh}}
		$\Phi \circ (\id \otimes s_i^S) \circ \Phi^{-1} = s_i^{coh}$ in $\Aut(H_T^*(G/B))$.
	\end{lem}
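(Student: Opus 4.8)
The reduction carried out just before the statement shows that the claim is equivalent to the operator identity $s_i^{coh} = \id + c_1(\mathcal{L}(\alpha_i))\,\partial_i^{coh}$ in $\mathrm{End}_S(H_T^*(G/B))$, where $S = H_T^*(pt)$; note that both sides are $S$-linear, since $m_{s_i}$ is $T$-equivariant and $\partial_i^{coh} = p_i^* p_{i*}$ is built from $T$-equivariant maps. The plan is to verify this identity after applying the localization theorem for the torus action. Because the restriction homomorphism $H_T^*(G/B) \to \bigoplus_{\omega \in W} H_T^*(e_\omega)$ to the fixed points is injective and becomes an isomorphism after inverting the nonzero elements of $S$, and because both operators are $S$-linear, it suffices to compare the two sides on each fixed-point component, i.e. to show that $(s_i^{coh} x)|_{e_\omega}$ and $\big((\id + c_1(\mathcal{L}(\alpha_i))\partial_i^{coh})x\big)|_{e_\omega}$ agree for every class $x$ and every $\omega \in W$.

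For the left-hand side I would use that $m_{s_i}$ is right multiplication by $\widetilde{s_i}$, hence $T$-equivariant and sending the fixed point $e_\omega = \widetilde{\omega}B$ to $e_{\omega s_i}$. Functoriality of the restriction to fixed points then gives $(s_i^{coh} x)|_{e_\omega} = (m_{s_i}^* x)|_{e_\omega} = x|_{e_{\omega s_i}}$. For the right-hand side the first summand contributes $x|_{e_\omega}$, and the second is handled by the Atiyah--Bott--Berline--Vergne formula applied to the $\mathbb{P}^1$-bundle $p_i \colon G/B \to G/P_i$: the fibre over $\bar{e}_\omega := p_i(e_\omega)$ contains exactly the two fixed points $e_\omega$ and $e_{\omega s_i}$, with opposite tangent weights $\pm\,\omega(\alpha_i)$ along the fibre, so that
\[
(\partial_i^{coh} x)|_{e_\omega} = (p_{i*}x)|_{\bar{e}_\omega} = \frac{x|_{e_\omega} - x|_{e_{\omega s_i}}}{\omega(\alpha_i)}.
\]
Multiplying by the fixed-point weight $c_1(\mathcal{L}(\alpha_i))|_{e_\omega} = -\,\omega(\alpha_i)$ and adding $x|_{e_\omega}$ collapses the right-hand side to $x|_{e_{\omega s_i}}$, matching the left-hand side and proving the identity.

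The conceptual content is light; the delicate point, and the one I expect to absorb most of the work, is keeping the sign conventions mutually consistent so that the final formula carries the stated $+$ sign. Specifically one must pin down the normalization of the fixed-point weight $c_1(\mathcal{L}(\lambda))|_{e_\omega} = \pm\,\omega(\lambda)$ and of the tangent weight of the fibre of $p_i$ (equivalently, the sign convention in Definition \ref{Definition - divided difference operator}, which the preceding remark already adjusts by replacing $\alpha_i$ with $-\alpha_i$), and then check that they combine to reproduce $\id + c_1(\mathcal{L}(\alpha_i))\partial_i^{coh}$ rather than its variant with $\mathcal{L}(-\alpha_i)$. As an independent cross-check I would establish the identity through the geometry of the $\mathbb{P}^1$-bundle directly: since $m_{s_i}$ is a fibrewise automorphism over $G/P_i$ one has $p_{i*} \circ m_{s_i}^* = p_{i*}$, so $m_{s_i}^* x - x$ lies in $\ker p_{i*}$; expressing $x$ in the rank-two $p_i^* H_T^*(G/P_i)$-basis $\{1,\, c_1(\mathcal{L}(\alpha_i))\}$ and using $m_{s_i}^* \mathcal{L}(\alpha_i) \cong \mathcal{L}(-\alpha_i)$ then recovers the same formula and confirms the signs.
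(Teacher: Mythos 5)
Your argument is correct and it reaches the same reduction the paper sets up, but the verification mechanism is genuinely different. Both proofs pass to the torus-fixed points and use that $m_{s_i}$ permutes them, so that $(s_i^{coh}x)|_{e_\omega}=x|_{e_{\omega s_i}}$. The paper, however, never evaluates $\partial_i^{coh}$ at fixed points: since $s_i^{coh}$ and $\Phi\circ(\id\otimes s_i^S)\circ\Phi^{-1}$ are both ring automorphisms, it compares them only on the multiplicative generators $c_1(\mathcal{L}(\lambda))$, where the algebraic side returns $c_1(\mathcal{L}(s_i\lambda))$ and the topological side permutes the restriction sequence $\{\omega(\lambda)\}_{\omega\in W}$ into $\{(\omega s_i)(\lambda)\}_{\omega\in W}$. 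You instead verify the equivalent operator identity $s_i^{coh}=\id+c_1(\mathcal{L}(\alpha_i))\,\partial_i^{coh}$ on an arbitrary class by feeding the Atiyah--Bott--Berline--Vergne formula for the $\mathbb{P}^1$-bundle $p_i$ into the second summand. This costs one extra ingredient (the localization of $p_{i*}$ over the two fixed points of the fibre) together with the sign bookkeeping you rightly flag, but it buys an identity checked on all classes without appealing to multiplicativity, and it is exactly the template that transfers to the $K$-theoretic analogue, Lemma \ref{Lemma - formula for s_i^{K}}. One small discrepancy with the paper's conventions: the commutative diagram in the paper's proof records $c_1(\mathcal{L}(\lambda))|_{e_\omega}=\omega(\lambda)$, whereas you take $c_1(\mathcal{L}(\alpha_i))|_{e_\omega}=-\omega(\alpha_i)$; correspondingly the fibre tangent weight at $e_\omega$ should be $-\omega(\alpha_i)$ rather than your $+\omega(\alpha_i)$, since the tangent space of $P_i/B$ at $eB$ is $\mathfrak{g}_{-\alpha_i}$. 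The two sign flips cancel in the product $c_1(\mathcal{L}(\alpha_i))|_{e_\omega}\cdot(\partial_i^{coh}x)|_{e_\omega}$, so your final computation and conclusion are unaffected.
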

	\begin{proof}
		The ring $H_T^*(G/B)$ can be treated as a subring of the localized ring $H_T^*(G/B)_{loc}$, which is defined as the localization of $H_T^*(G/B)$ in $\mathfrak{m}/\{0\}$ where $\mathfrak{m}$ is a maximal ideal of $H_T(pt)$ generated by the elements $t_1, t_2 \ldots t_n$. The fundamental classes $[x]$ of the $T$-fixed points of $G/B$ form a basis of \emph{the localized cohomology ring} (\cite[Section 2.2]{MNS2}), called \emph{the fixed point basis}. Let $\restr{\kappa}{x} = i_x^*(\kappa)$ denote the restriction of $\kappa$ to the fixed point $x$, where $\kappa \in H_T^*(G/B)$ and $i_x\dd \{x\} \to G/B$. We have the following commutative diagram:
		\begin{center}
		\begin{tikzcd}
			 S \otimes_{S^W} S \arrow[d, "\id \otimes s_i^S"] \arrow[r, "\Phi"] & H_T^*(G/B) \arrow[d, "\Phi \circ (\id \otimes s_i^S) \circ \Phi^{-1}"] \arrow[hookrightarrow]{r}  & H_T^*(G/B)_{loc} \arrow[d, "s_i"] \\
			S \otimes_{S^W} S  \arrow[r, "\Phi"] & H_T^*(G/B) \arrow[hookrightarrow]{r}  & H_T^*(G/B)_{loc}
		\end{tikzcd}%
		\end{center}
\ 
		\begin{center}
		\begin{tikzcd}
			1 \otimes \lambda \arrow[d, "1 \otimes s_i^S"] \arrow[r, "\Phi"] & c_1(\llll[\lambda]) \arrow[d, "\Phi \circ (\id \otimes s_i^S) \circ \Phi^{-1}"] \arrow{r}  & \{\omega(\lambda)\}_{\omega \in W}\arrow[d, "s_i"] \\
			1 \otimes s_i(\lambda)  \arrow[r, "\Phi"] & c_1(\llll[s_i(\lambda)]) \arrow{r}  & \{(\omega s_i)(\lambda)\}_{\omega \in W}
		\end{tikzcd}%
		\end{center}
		The right-most vertical arrow in this diagram is induced by topological action of Weyl group ($s_i$ permutes fixed points, so in cohomology it permutes summands of the direct sum) and so is the central arrow. This proves the lemma, so the formula
		\[ s_i^{coh} = \id + c_1(\llll) \partial_i^{coh}\]
		is satisfied.
	\end{proof}
	\subsection{Equivariant K-theory} \label{Action of Weyl group - 2}
	Defining the isobaric divided difference operator on the representation ring $R(T)$ is analogous to equivariant cohomology case. Let $\omega^R\dd R(T) \to R(T)$ be the automorphism acting on generators $e^\lambda$ of $R(T)$ by $\omega^R(e^\lambda) = e^{\omega^S(\lambda)}$.
	\begin{defn} [{\cite[Chapter 1]{HLS}}] \label{Definition - isobaric divided difference}
		\emph{The isobaric divided difference operator} $\widetilde{\partial}_i^{K}: R(T) \to R(T)$ is defined as
		\begin{equation*}
			\widetilde{\partial}_i^{K}(u) = \frac{u - e^{\alpha_i}s_i^R(u)}{1 - e^{\alpha_i}}.
		\end{equation*}
	\end{defn}
	\begin{uwg}
		In \cite{HLS} this operator is defined as $\widetilde{\partial}_i^{K}(u) = \frac{u - e^{-\alpha_i}s_i^R(u)}{1 - e^{-\alpha_i}}$, but for the flag variety $G/B_-$. By changing $-\alpha_i$ for $\alpha_i$, we translate this definition to our case.
	\end{uwg}
	\begin{uwg}
		As in Remark \ref{Remark - field of fraction}, the element $\frac{u - e^{-\alpha_i}s_i^R(u)}{1 - e^{-\alpha_i}}$ formally belongs to the field of fractions of $R(T)$, but since 
		\begin{equation*}
			\frac{e^\lambda - e^{\alpha_i}s_i^R(e^\lambda)}{1 - e^{\alpha_i}} = \frac{e^\lambda - e^{\alpha_i}e^{\lambda - \frac{2(\lambda, \alpha_i)}{(\alpha_i, \alpha_i)}\alpha_i}}{1 - e^{\alpha_i}} = e^{\lambda} \frac{1 - e^{\alpha_i(-\frac{2(\lambda, \alpha_i)}{(\alpha_i, \alpha_i)} + 1)} }{1 - e^{\alpha_i}}
		\end{equation*}
		it belongs to $R(T)$, because $\frac{2(\lambda, \alpha_i)}{(\alpha_i, \alpha_i)}$ is an integer.
	\end{uwg}
	Analogously to what we have already described in the cohomology case (Section \ref{Action of Weyl group - 1}), the isomorphism (Lemma \ref{Lemma - Borel presentation of K-theory}) 
	\begin{equation*}
		\Psi: R(T) \otimes_{R(T)^W} R(T) \to K_T(G/B)
	\end{equation*}
	results in the following lemma.
	\begin{lem}  [{\cite[Proposition 4.5]{HLS}}]
		The following equation is satisfied
		\begin{equation*}
			\partial_i^K \circ \Psi = \Psi \circ (\id \otimes \widetilde{\partial}_i^{K}).
		\end{equation*}
	\end{lem}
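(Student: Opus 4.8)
The plan is to mirror the cohomological argument intertwining $\partial_i^{coh}$ with $\widetilde{\partial}_i^{coh}$: first reduce the operator identity to a single computation on line bundle classes by exploiting linearity, then verify that computation by restricting to the $T$-fixed points and invoking the localization theorem. Before anything else I would check that the right-hand side is even well defined, i.e.\ that $\id \otimes \widetilde{\partial}_i^{K}$ descends to the balanced tensor product $R(T) \otimes_{R(T)^W} R(T)$. This amounts to the $R(T)^W$-linearity of $\widetilde{\partial}_i^{K}$, which follows from the stronger fact that $\widetilde{\partial}_i^{K}$ is $R(T)^{s_i}$-linear: for $c$ with $s_i^R(c)=c$ one reads off $\widetilde{\partial}_i^{K}(cu)=c\,\widetilde{\partial}_i^{K}(u)$ directly from Definition \ref{Definition - isobaric divided difference}, and $R(T)^W \subseteq R(T)^{s_i}$.

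Next comes the reduction. Since $\partial_i^K = p_i^* p_{i*}$ and $p_{i*}$ obeys the projection formula, $\partial_i^K$ is $K_T(G/P_i)$-linear, and in particular linear over the scalars $R(T)=K_T(pt)$. Writing $\Psi(f\otimes g)=f\cdot \gamma(g)$, this gives $\partial_i^K(\Psi(f\otimes g))=f\cdot \partial_i^K(\gamma(g))$, while $\Psi(f\otimes \widetilde{\partial}_i^{K}(g))=f\cdot \gamma(\widetilde{\partial}_i^{K}(g))$. As both sides are additive in $g$, the whole identity collapses to the single claim
\[
\partial_i^K(\llll[\lambda]) = \gamma\bigl(\widetilde{\partial}_i^{K}(e^\lambda)\bigr), \qquad \lambda \in \Xi(T).
\]

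For this core claim I would pass to fixed points. By the localization theorem the restriction map $K_T(G/B) \to \bigoplus_{\omega \in W} K_T(e_\omega)$ becomes injective after inverting the relevant elements of $R(T)$, so it suffices to compare the restrictions $i_{e_\omega}^*$ for every $\omega$. On the right, $\gamma$ is a ring map with $i_{e_\omega}^*\gamma(e^\mu)=e^{\omega\mu}$, hence $i_{e_\omega}^*\circ\gamma=\omega^R$ and
\[
i_{e_\omega}^*\gamma\bigl(\widetilde{\partial}_i^{K}(e^\lambda)\bigr)=\omega^R\bigl(\widetilde{\partial}_i^{K}(e^\lambda)\bigr)=\frac{e^{\omega\lambda}-e^{\omega\alpha_i}e^{\omega s_i\lambda}}{1-e^{\omega\alpha_i}}.
\]
On the left, I would compute $i_{e_\omega}^*\partial_i^K(\llll[\lambda])$ using the $\mathbb{P}^1$-bundle structure of $p_i\colon G/B \to G/P_i$: the fiber through $e_\omega$ carries exactly the two fixed points $e_\omega$ and $e_{\omega s_i}$, and Atiyah--Bott localization for $p_{i*}$ along this fiber yields a two-term sum which simplifies to $\frac{i_{e_\omega}^*\llll[\lambda]-e^{\omega\alpha_i}\,i_{e_{\omega s_i}}^*\llll[\lambda]}{1-e^{\omega\alpha_i}}=\frac{e^{\omega\lambda}-e^{\omega\alpha_i}e^{\omega s_i\lambda}}{1-e^{\omega\alpha_i}}$, matching the right-hand side.

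The main obstacle is the bookkeeping in this last step: attaching the two fiber tangent weights $\pm\omega\alpha_i$ to the correct fixed points, so that the localization sum produces $e^{\alpha_i}$ rather than $e^{-\alpha_i}$ in both numerator and denominator. This is precisely the $G/B$ versus $G/B_-$ convention flagged in the remarks following Definition \ref{Definition - isobaric divided difference}, and it must be tracked consistently through the identification $i_{e_\omega}^*\llll[\lambda]=e^{\omega\lambda}$. Once the signs are pinned down, the algebraic simplification is immediate and injectivity of the restriction to fixed points finishes the proof.
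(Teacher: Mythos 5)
Your argument is correct, but the first thing to say is that the paper contains no proof of this lemma to compare against: it is imported verbatim as \cite[Proposition 4.5]{HLS}. What you have written is therefore a genuine, self-contained proof rather than a variant of the paper's. The skeleton is sound: $R(T)^{s_i}$-linearity of $\widetilde{\partial}_i^{K}$ (hence $R(T)^W$-linearity) makes $\id\otimes\widetilde{\partial}_i^{K}$ well defined on the balanced tensor product; the projection formula makes $\partial_i^K=p_i^*p_{i*}$ linear over $K_T(pt)=R(T)$ and collapses the identity to $\partial_i^K([\mathcal{L}(\lambda)])=\gamma(\widetilde{\partial}_i^{K}(e^\lambda))$; and the fixed-point comparison closes the loop. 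Your sign bookkeeping is consistent with the paper's conventions: with $i_{e_\omega}^*[\mathcal{L}(\lambda)]=e^{\omega\lambda}$ (exactly what the $SL_2/B$ example in Chapter \ref{Chevalley formula} uses), the fiber tangent weights at $e_\omega$ and $e_{\omega s_i}$ are $-\omega(\alpha_i)$ and $\omega(\alpha_i)$, and the two-term Lefschetz sum
\[
\frac{e^{\omega\lambda}}{1-e^{\omega\alpha_i}}+\frac{e^{\omega s_i\lambda}}{1-e^{-\omega\alpha_i}}
=\frac{e^{\omega\lambda}-e^{\omega\alpha_i}\,e^{\omega s_i\lambda}}{1-e^{\omega\alpha_i}}
=\omega^R\bigl(\widetilde{\partial}_i^{K}(e^\lambda)\bigr),
\]
so the $e^{+\alpha_i}$ convention of Definition \ref{Definition - isobaric divided difference} comes out automatically. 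A concrete sanity check using only the paper's $SL_2$ data: $\widetilde{\partial}_1^{K}(e^{\alpha_1})=-1$, while $[\mathcal{L}(\alpha_1)]=e^{-\alpha_1}[\mathcal{O}_{s_1}]-(1+e^{-\alpha_1})[\mathcal{O}_{\id}]$ gives $\partial_1^K([\mathcal{L}(\alpha_1)])=e^{-\alpha_1}-(1+e^{-\alpha_1})=-1$. Two small points worth tightening: in the localization step you should remark that $K_T(G/B)$ is a free $R(T)$-module, so it injects into its localization and comparing restrictions at fixed points genuinely suffices; and the reduction to $g=e^\lambda$ uses additivity of $\gamma$ and of $\widetilde{\partial}_i^{K}$, which is immediate but should be stated. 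The standard proofs in the literature (going back to Demazure) derive the intertwining relation algebraically from the structure of the $\mathbb{P}^1$-fibration $G/B\to G/P_i$; your route is shorter but takes the equivariant Lefschetz fixed-point formula as an external input.
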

	Rewriting Definition \ref{Definition - isobaric divided difference} leads to the equation:
	\begin{align*}
		s_i^R &= e^{-\alpha_i} - (e^{-\alpha_i} - 1)\widetilde{\partial}_i^{K}\\
		\Psi(\id \otimes s_i^R)  &= \Psi(1 \otimes e^{-\alpha_i}) - (\Psi(1 \otimes e^{-\alpha_i}) - \Psi(1\otimes 1)) \cdot \partial_i^K \circ \Psi\\
		(\Psi(\id \otimes s_i^R)\Psi^{-1})([\mathcal{U}]) &= [\mathcal{L}(-\alpha_i)][\mathcal{U}] - ([\mathcal{L}(-\alpha_i)] - 1)\partial_i([\mathcal{U}])
	\end{align*}
	\begin{lem} \label{Lemma - formula for s_i^{K}}
		The equation $\Psi(\id \otimes s_i^R)\Psi^{-1} = s_i^{K}$ is satisfied in $\Aut(K_T(G/B))$.
	\end{lem}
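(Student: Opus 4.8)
The plan is to reproduce the topological localization argument of Lemma~\ref{Lemma - formula for s_i^{coh}}, with fixed-point fundamental classes replaced by fixed-point structure sheaves $\oooo$ and the Chern class $c_1(\mathcal{L}(\lambda))$ replaced by the $K$-theoretic character of $\mathcal{L}(\lambda)$. First I would embed $K_T(G/B)$ into its localization $K_T(G/B)_{loc}$, obtained by inverting the nonzero characters in $R(T) = K_T(pt)$. By the localization theorem in equivariant $K$-theory the restriction to fixed points identifies $K_T(G/B)_{loc}$ with $\bigoplus_{\omega \in W} R(T)_{loc}\cdot\oooo$, and since $K_T(G/B)$ is a free $R(T)$-module (the Schubert basis $\oo$) this embedding is injective. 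Under it, a class $\kappa$ is faithfully recorded by the tuple of its restrictions $\restr{\kappa}{e_\omega}$ to the $T$-fixed points $e_\omega = \widetilde{\omega}B$.

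Both $s_i^K$ and the conjugate $\Psi(\id\otimes s_i^R)\Psi^{-1}$ are $R(T)$-linear endomorphisms of $K_T(G/B)$: the former because right multiplication by $\widetilde{s_i}$ commutes with the left $T$-action and hence induces the identity on $K_T(pt)$, the latter because $\id\otimes s_i^R$ fixes the left tensor factor, which $\Psi$ identifies with the $R(T)$-module structure. Since $\Psi$ is an isomorphism (Lemma~\ref{Lemma - Borel presentation of K-theory}), the ring $K_T(G/B)$ is generated as an $R(T)$-algebra by the line-bundle classes $\llll[\lambda] = \Psi(1\otimes e^\lambda)$, so it is enough to verify that the two maps agree on each $\llll[\lambda]$.

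On the conjugate side this is immediate: $\Psi(\id\otimes s_i^R)\Psi^{-1}(\llll[\lambda]) = \Psi(1\otimes e^{s_i(\lambda)}) = \llll[s_i(\lambda)]$. For $s_i^K$ I would pass to the localization. The fiber of $\mathcal{L}(\lambda)$ over $e_\omega$ carries the $T$-character $\omega(\lambda)$, so $\restr{\llll[\lambda]}{e_\omega} = e^{\omega(\lambda)}$, exactly the $K$-theoretic counterpart of the rightmost column of the diagram in Lemma~\ref{Lemma - formula for s_i^{coh}}. Because $m_{s_i}$ sends $e_\omega$ to $e_{\omega s_i}$, pulling back gives $\restr{s_i^K(\kappa)}{e_\omega} = \restr{\kappa}{e_{\omega s_i}}$ for every $\kappa$; applied to $\kappa = \llll[\lambda]$ this yields $\restr{s_i^K(\llll[\lambda])}{e_\omega} = e^{(\omega s_i)(\lambda)} = e^{\omega(s_i(\lambda))} = \restr{\llll[s_i(\lambda)]}{e_\omega}$ for all $\omega$. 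By injectivity of the fixed-point restriction we conclude $s_i^K(\llll[\lambda]) = \llll[s_i(\lambda)]$, matching the conjugate side.

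Combining the last two paragraphs, the two $R(T)$-linear maps agree on the algebra generators $\llll[\lambda]$ and therefore coincide on all of $K_T(G/B)$, which is the claim. I expect the main obstacle to be the bookkeeping around the fixed-point description rather than any deep difficulty: one must invoke the localization theorem in equivariant $K$-theory (in place of localizing at a maximal ideal, as in cohomology) to secure injectivity of the restriction map, and keep the convention $\restr{\llll[\lambda]}{e_\omega} = e^{\omega(\lambda)}$ consistent with the direction of the permutation $e_\omega \mapsto e_{\omega s_i}$, so that the characters produced on the two sides genuinely coincide.
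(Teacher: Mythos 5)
Your proposal is correct and follows essentially the same route as the paper, whose proof of this lemma is literally ``analogous to the proof of Lemma~\ref{Lemma - formula for s_i^{coh}}'': embed into the localization, use the fixed-point basis, and match the two sides on the line-bundle classes $[\mathcal{L}(\lambda)]$ coming from the Borel presentation. The only cosmetic remark is that to pass from agreement on the algebra generators $[\mathcal{L}(\lambda)]$ to agreement everywhere you should invoke that both maps are ring homomorphisms (pullback by an automorphism, and $\id\otimes s_i^R$ respectively), not merely $R(T)$-linear.
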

	\begin{proof}
		Analogous to the proof of Lemma \ref{Lemma - formula for s_i^{coh}}. The formula
		\begin{equation*}
			s_i^K([\mathcal{U}]) = [\mathcal{L}(-\alpha_i)][\mathcal{U}] - ([\mathcal{L}(-\alpha_i)] - 1)\partial_i^K([\mathcal{U}])
		\end{equation*}
		is satisfied, where $[\mathcal{U}] \in K_T(G/B)$.
	\end{proof}
	\section{Chevalley formula} \label{Chevalley formula}
	Our next goal is to express the action of the Weyl group in basis of Schubert varieties, both in cohomology and $K$-theory. In both formulas for these actions (Lemma \ref{Lemma - formula for s_i^{coh}} and Lemma \ref{Lemma - formula for s_i^{K}}), multiplication by a linear bundle (respectively its Chern class) appears, which is described by \emph{the Chevalley formula}. This knowledge, along with Proposition \ref{Proposition - BGG operator} and Proposition \ref{Proposition - Demazure operator}, allows us to understand the way in which actions $s_i^{coh}$ and $s_i^K$ look like in the bases of Schubert varieties. \\ \\
	First, let us introduce the Chevalley formula in equivariant cohomology.
	\begin{tw} [{\cite[Theorem 11.1.7, part e]{Kum}}] \label{Theorem - Chevalley formula for cohomology}
		Let $[X_\omega]$ denote class of the Schubert variety and $\mathcal{L}(\alpha)$ a linear bundle for $\alpha \in \Xi(T)$. Then
		\begin{equation*}
			c_1(\mathcal{L}(\alpha)) [X_\omega] = \omega(\alpha) [X_\omega] - \sum_{\beta \text{ positive root such that } l(\omega s_\beta) = l(\omega)} \langle \alpha, \beta^\vee \rangle \xx[\omega s_\beta].
		\end{equation*}
	\end{tw}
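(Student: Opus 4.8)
The plan is to prove the identity by localisation at the $T$-fixed points, exactly in the spirit of the proof of Lemma \ref{Lemma - formula for s_i^{coh}}. Because $H_T^*(G/B)$ is a free $S$-module and its $T$-fixed locus is the finite set $\{e_v\}_{v\in W}$, the total restriction map $H_T^*(G/B)\to\bigoplus_{v\in W}H_T^*(e_v)=\bigoplus_{v\in W}S$ is injective; this is the unlocalised form of the inclusion $H_T^*(G/B)\hookrightarrow H_T^*(G/B)_{loc}$ used there. Hence it suffices to apply $i_v^*$ to both sides and verify a scalar identity in $S$ for every $v\in W$. For the left factor I would reuse the computation already recorded in the commutative diagram of that lemma, namely $\restr{c_1(\mathcal{L}(\alpha))}{e_v}=v(\alpha)$.

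For the Schubert classes I would feed in the standard localisation formula (Billey's formula, or equivalently Andersen--Jantzen--Soergel): $\restr{[X_\omega]}{e_v}$ vanishes unless $v\le\omega$ in the Bruhat order, and on the diagonal $\restr{[X_\omega]}{e_\omega}$ equals, up to sign, the product $\prod_{\beta>0,\ \omega^{-1}\beta<0}\beta$. A dimension count (the product with the degree-$2$ class $c_1$ can only involve Schubert varieties of homological dimension $\ell(\omega)-1$) shows that the only positive roots that can contribute are those with $\ell(\omega s_\beta)=\ell(\omega)-1$. With these inputs the assertion reduces, at each $v$, to the scalar identity
\[
\bigl(v(\alpha)-\omega(\alpha)\bigr)\,\restr{[X_\omega]}{e_v}=-\sum_{\beta}\langle\alpha,\beta^\vee\rangle\,\restr{[X_{\omega s_\beta}]}{e_v}.
\]

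First I would remove the dependence on $\alpha$: every term above is $\mathbb{Q}$-linear in $\alpha$ (additivity of $c_1$ under $\mathcal{L}(\alpha+\alpha')=\mathcal{L}(\alpha)\otimes\mathcal{L}(\alpha')$, together with the linearity of $v(\alpha)$ and of $\langle\alpha,\beta^\vee\rangle$), and the simple roots form a $\mathbb{Q}$-basis of $\Xi(T)\otimes\mathbb{Q}$, so it is enough to treat $\alpha=\alpha_i$. For a simple root the product $c_1(\mathcal{L}(\alpha_i))\cdot[X_\omega]$ can be analysed through the $\mathbb{P}^1$-bundle $p_i\colon G/B\to G/P_i$: on a fibre $P_i/B\cong\mathbb{P}^1$ the bundle $\mathcal{L}(\alpha_i)$ has degree $\langle\alpha_i,\alpha_i^\vee\rangle=2$, so by the projection formula multiplication by $c_1(\mathcal{L}(\alpha_i))$ is governed by the operator $\partial_i^{coh}$ of Proposition \ref{Proposition - BGG operator}. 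Equivalently, the pointwise identity localises onto the rank-one subsystem generated by $\beta$, where it becomes the elementary relation tying $v(\alpha_i)-\omega(\alpha_i)$ to $\langle\alpha_i,\beta^\vee\rangle$ through the Goresky--Kottwitz--MacPherson edge condition: the difference $\restr{[X_\omega]}{e_v}-\restr{[X_\omega]}{e_{vs_\beta}}$ is divisible by the root labelling the edge joining $e_v$ and $e_{vs_\beta}$.

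The main obstacle is this last matching: turning the GKM divisibility relations, for all $v$ simultaneously, into precisely the integers $\langle\alpha,\beta^\vee\rangle$ and confirming that no term of dimension $\ell(\omega)-1$ is missing. This is where the explicit recursion for $\restr{[X_\omega]}{e_v}$ does the genuine work, while everything preceding it is formal. A parallel geometric route that avoids the restriction formulas is to compute $p_{i*}\bigl(c_1(\mathcal{L}(\alpha_i))\cdot[X_\omega]\bigr)$ fibrewise and pull back along $p_i$, but the bookkeeping of which classes $[X_{\omega s_\beta}]$ survive is again the crux.
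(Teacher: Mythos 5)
The paper itself gives no proof of this statement---it is imported verbatim from Kumar \cite[Theorem 11.1.7(e)]{Kum}---so the only comparison available is with the standard argument, which is indeed the one you outline. The formal part of your plan is correct: injectivity of restriction to the fixed points (freeness of $H_T^*(G/B)$ over $S$), the identity $\restr{c_1(\mathcal{L}(\alpha))}{e_v}=v(\alpha)$, the support-and-degree count showing that only $\xx$ itself and classes $\xx[\omega s_\beta]$ with $l(\omega s_\beta)=l(\omega)-1$ can occur with the latter having constant coefficients (incidentally, the condition ``$l(\omega s_\beta)=l(\omega)$'' in the statement is impossible for a reflection and must be read as $l(\omega)-1$), and the evaluation at $e_\omega$, where all lower classes vanish, which yields the diagonal coefficient $\omega(\alpha)$.

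However, the argument is not complete, and the gap sits exactly where the content of the theorem lies. Writing $c_1(\mathcal{L}(\alpha))\xx=\omega(\alpha)\xx+\sum_\beta c_\beta\,\xx[\omega s_\beta]$ and restricting to $e_{\omega s_\beta}$ (where the classes $\xx[\omega s_{\beta'}]$ for $\beta'\neq\beta$ vanish), one finds $c_\beta\,\restr{\xx[\omega s_\beta]}{e_{\omega s_\beta}}=\bigl(\omega s_\beta(\alpha)-\omega(\alpha)\bigr)\restr{\xx}{e_{\omega s_\beta}}=-\langle\alpha,\beta^\vee\rangle\,\omega(\beta)\,\restr{\xx}{e_{\omega s_\beta}}$, so the theorem is equivalent to the identity $\omega(\beta)\cdot\restr{\xx}{e_{\omega s_\beta}}=\restr{\xx[\omega s_\beta]}{e_{\omega s_\beta}}$, and in particular to the non-vanishing of $\restr{\xx}{e_{\omega s_\beta}}$. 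You name exactly this step as ``the crux'' but supply no argument for it; the GKM edge condition only gives divisibility of $\restr{\xx}{e_v}-\restr{\xx}{e_{vs_\beta}}$ by the edge label, which by itself neither produces the integer $\langle\alpha,\beta^\vee\rangle$ nor rules out $c_\beta=0$. The proposed detour through the fibration $p_i$ does not close the gap as stated, because $p_i$ is attached to the simple root $\alpha_i$ sitting in the character slot, whereas the undetermined coefficients are indexed by arbitrary positive roots $\beta$ in the Weyl-group slot; the phrase ``multiplication by $c_1(\mathcal{L}(\alpha_i))$ is governed by $\partial_i^{coh}$'' would have to be upgraded to the precise twisted Leibniz rule expressing $\partial_i^{coh}(c^T(\lambda)\,u)$ in terms of $c^T(s_i\lambda)\,\partial_i^{coh}(u)$ and $\langle\lambda,\alpha_i^\vee\rangle\,u$, from which the Chevalley formula does follow by induction on $l(\omega)$ via Proposition \ref{Proposition - BGG operator}---but that identity is neither stated nor proved in your proposal. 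Until either the restriction $\restr{\xx}{e_{\omega s_\beta}}$ is actually computed (e.g.\ via Billey's formula) or the Leibniz rule for $\partial_i^{coh}$ is established, what you have is a correct reduction, not a proof.
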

	The presentation of the Chevalley formula for an equivariant $K$-theory requires more preparation. To start, we define a family of operators $T_\beta^0$ and $T_\beta^1$ on $R(T)$ for $\beta$ being a root.
	\begin{defn} \label{Definition - family of operators T}
		Let $\beta$ be a root.
		\begin{align*}
			T_\beta^1(e^\lambda) &= e^{s_\beta \lambda}; \\
			T_\beta^0(e^\lambda) &=
			\begin{dcases*}
				0 & \text{if } $\langle \lambda, \beta^\vee \rangle = 0$, \\
				e^\lambda + \ldots + e^{\lambda - (\lambda(\beta^\vee) - 1)\beta} & \text{if } $\langle \lambda, \beta^\vee \rangle > 0$, \\
				-e^{\lambda + \beta} - \ldots - e^{\lambda - \lambda(\beta^\vee)\beta} & \text{if } $\langle \lambda, \beta^\vee \rangle < 0$.
			\end{dcases*}
		\end{align*}
	\end{defn}
	We define a monoid $W'$ with the same set of generators and the same braid relations as a Weyl group $W$ but instead of relations $s_i^2 = 1$, relations $s_i^2 = s_i$ hold. There is an obvious map $\Delta$ from $W'$ to $W$, which for the element $x \in W'$ return the element of $W$ with the same reduced decomposition in $W$ as $x$ have in $W'$. For example $\Delta(s_i^2) = s_i$. Let $\omega$ have the reduced decomposition $s_{i_1} s_{i_2} \ldots s_{i_n}$ and $\epsilon \in \{0,1\}^n$. By $x(\epsilon, \omega)$ we denote element $\Delta(s_{i_{j_1}} \ldots s_{i_{j_k}})$ for $j_l$ such that $\epsilon_{j_l} = 1$. For example, if $\omega = s_1 s_2 s_3$ and $\epsilon = \{1,0,1\}$ we have $x(\epsilon, \omega) = s_1s_3$. The following Theorem describes the action of left multiplication by $[\mathcal{L}(\alpha)]$.
	\begin{tw} [{\cite[Chapter 1]{Wil}}] \label{Theorem - Chevalley formula for K-theory}
		Let $\omega$ have the reduced decomposition $s_{i_1} s_{i_2} \ldots s_{i_n}$. Then, for every $\alpha \in \Xi(T)$ the formula
		\begin{equation*}
			[\mathcal{L}(\alpha)] \oo = \sum_{(\epsilon_1, \epsilon_2, \ldots, \epsilon_n) \in \{0,1\}^n} T_{-\alpha_{i_1}}^{\epsilon_1} \ldots T_{-\alpha_{i_n}}^{\epsilon_n} (e^\alpha) \oo[x(\epsilon, \omega)]
		\end{equation*}
		holds.
	\end{tw}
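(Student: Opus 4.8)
The plan is to reduce the statement to a single commutation relation between multiplication by a line bundle and the Demazure operator $\partial_i^K$, and then to iterate that relation along the reduced word. First I would record that, since $s_{i_1}\ldots s_{i_n}$ is reduced, Proposition \ref{Proposition - Demazure operator} gives $\oo = \partial_{i_n}^K\ldots\partial_{i_1}^K(\oo[\id])$, the operator $\partial_{i_1}^K$ acting first and $\oo[\id]$ being the class of the point $B/B$. The same proposition shows that each $\partial_i^K$ is idempotent, $(\partial_i^K)^2=\partial_i^K$, and that the $\partial_i^K$ satisfy the braid relations; hence for any subword the composite $\prod_{j:\epsilon_j=1}\partial_{i_j}^K$ applied to $\oo[\id]$ equals $\oo[x(\epsilon,\omega)]$, where $x(\epsilon,\omega)$ is exactly the Demazure product encoded by the monoid $W'$ and the map $\Delta$. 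This is where the relations $s_i^2=s_i$ of $W'$ enter. I would also note that $\partial_i^K=p_i^*p_{i*}$ is $K_T(pt)$-linear, a fact used only at the very end to pull scalars out.

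Next I would reinterpret the operators of Definition \ref{Definition - family of operators T}. A direct computation (the telescoping already carried out in the remark following Definition \ref{Definition - isobaric divided difference}) identifies, for $\beta=-\alpha_i$, the operator $T_{-\alpha_i}^1$ with the reflection $s_i^R$ and $T_{-\alpha_i}^0$ with the divided difference $u\mapsto \frac{u-s_i^R(u)}{1-e^{\alpha_i}}$; in particular $\widetilde{\partial}_i^{K}=T_{-\alpha_i}^0+T_{-\alpha_i}^1$. Writing $m_f$ for multiplication by $\gamma(f)$ (so that $m_{e^\alpha}$ is multiplication by $\llll[\alpha]$, extended $R(T)$-linearly), the heart of the argument is the operator identity
\[
m_f\circ\partial_i^K = \partial_i^K\circ m_{T_{-\alpha_i}^1 f} + m_{T_{-\alpha_i}^0 f}
\]
on $K_T(G/B)$. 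I would prove this through the Borel presentation (Lemma \ref{Lemma - Borel presentation of K-theory}), under which $\partial_i^K=\id\otimes\widetilde{\partial}_i^{K}$ and $m_f$ is multiplication in the second tensor factor; the identity then reduces to the pointwise relation $f\,\widetilde{\partial}_i^{K}(g)=\widetilde{\partial}_i^{K}\big(s_i^R(f)\,g\big)+T_{-\alpha_i}^0(f)\,g$ in the twisted group algebra $R(T)\rtimes W$, which one checks directly on characters $f=e^\lambda$.

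Then I would iterate. Starting from $m_{e^\alpha}\,\oo=m_{e^\alpha}\,\partial_{i_n}^K\ldots\partial_{i_1}^K(\oo[\id])$, I push $m_{e^\alpha}$ to the right past $\partial_{i_n}^K$, then past $\partial_{i_{n-1}}^K$, and so on. Each application of the commutation relation splits a term into two, according to whether the passing operator survives on the left (the $\epsilon_j=1$ branch, applying $T_{-\alpha_{i_j}}^1$ to the accumulated character) or is absorbed into the scalar (the $\epsilon_j=0$ branch, applying $T_{-\alpha_{i_j}}^0$). After processing all $n$ letters one obtains a sum over $\epsilon\in\{0,1\}^n$ of terms $\big(\prod_{j:\epsilon_j=1}\partial_{i_j}^K\big)\,m_{g_\epsilon}(\oo[\id])$ with $g_\epsilon=T_{-\alpha_{i_1}}^{\epsilon_1}\ldots T_{-\alpha_{i_n}}^{\epsilon_n}(e^\alpha)$, the operators appearing in exactly the stated order because $T_{-\alpha_{i_n}}^{\epsilon_n}$ is introduced first. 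Finally $m_{g_\epsilon}(\oo[\id])=g_\epsilon\,\oo[\id]$ (restriction of the line bundle to the base point $e_{\id}$ is the identity on $R(T)$), and $K_T(pt)$-linearity together with the Demazure-product identity from the first step turns $\big(\prod_{j:\epsilon_j=1}\partial_{i_j}^K\big)(g_\epsilon\,\oo[\id])$ into $g_\epsilon\,\oo[x(\epsilon,\omega)]$, which is the desired formula.

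The main obstacle is the bookkeeping of the iteration together with the sign conventions. Keeping track of the order in which the operators $T_{-\alpha_{i_j}}^{\epsilon_j}$ accumulate on $e^\alpha$, and matching the surviving Demazure operators with the Demazure product $x(\epsilon,\omega)$ rather than an ordinary product, is the delicate point; this is precisely what forces the passage to the monoid $W'$ and the idempotency $(\partial_i^K)^2=\partial_i^K$. A secondary subtlety is fixing the normalisation ($-\alpha_i$ versus $\alpha_i$, and the $G/B_+$ versus $G/B_-$ conventions) so that the commutation relation produces $T_{-\alpha_i}^0$ and $T_{-\alpha_i}^1$ with the indices exactly as written. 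As a consistency check I would verify that the all-ones term yields $e^{\omega(\alpha)}\oo$, the expected leading term paralleling Theorem \ref{Theorem - Chevalley formula for cohomology}; an alternative, more computational route, kept in reserve, would be to verify the identity after restriction to each $T$-fixed point $e_w$ using the known restrictions of structure sheaves and of $\llll[\alpha]$.
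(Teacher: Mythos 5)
Your argument is sound, but note that the paper does not actually prove this statement: Theorem \ref{Theorem - Chevalley formula for K-theory} is imported from \cite[Chapter 1]{Wil}, and all the paper adds is the sign-convention remark and the fixed-point verification on $SL_2/B$ --- the very check you keep in reserve as a fallback. What you give is a genuine self-contained derivation, essentially the standard one underlying Willems' result, and the details check out against the paper's conventions: $T^1_{-\alpha_i}=s_i^R$ and $T^0_{-\alpha_i}(u)=\frac{u-s_i^R(u)}{1-e^{\alpha_i}}$ agree with Definition \ref{Definition - family of operators T}, so that $\widetilde{\partial}_i^{K}=T^0_{-\alpha_i}+T^1_{-\alpha_i}$; your commutation relation reduces under Lemma \ref{Lemma - Borel presentation of K-theory} to the one-line identity $f\,\widetilde{\partial}_i^{K}(g)=\widetilde{\partial}_i^{K}\bigl(s_i^R(f)\,g\bigr)+T^0_{-\alpha_i}(f)\,g$; the iteration does introduce $T^{\epsilon_n}_{-\alpha_{i_n}}$ innermost, matching the stated composition order; and the normalisation $[\mathcal{L}(\lambda)]\oo[\id]=e^{\lambda}\oo[\id]$ at the base point is confirmed by the paper's own $SL_2$ computation. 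One simplification: you do not need the braid relations for the $\partial_i^K$ as a separate input, since Proposition \ref{Proposition - Demazure operator} already determines the effect of any sequence of Demazure operators on $\oo[\id]$ as the Demazure product of the corresponding letters, which is exactly the map $\Delta$ defining $x(\epsilon,\omega)$. Compared with the bare citation, your route has the advantage of making transparent both why the support is indexed by the monoid $W'$ rather than by $W$, and why the operators carry $-\alpha_i$ rather than $\alpha_i$.
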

	\begin{uwg}
		In \cite{Wil}, the Chevalley formula is considered on flag variety $G/B_-$ and that is the reason why operators $T_{-\alpha_i}$ appear in this formula instead of operators $T_{\alpha_i}$.
	\end{uwg}
	\begin{uwg}
		To check the correctness of the above-mentioned remark let us examine an example of a flag variety $SL_2/B$. It is exactly the projective space $\mathbb{P}^1$. It consists of two Schubert cells - a point and a one-dimensional disc. Both of them correspond to the fixed points of the action of the Weyl group. Let us name these points $0$ and $\infty$. We know that
		\begin{align*}
			&\restr{\oo[\id]}{0} = 1 - e^{\alpha_1} &\restr{\oo[\id]}{\infty} = 0 \\
			&\restr{\oo[s_1]}{0} = 1 &\restr{\oo[s_1]}{\infty} = 1\\
			&\restr{\llll[\alpha_1]\oo[\id]}{0} = e^{\alpha_1} - e^{2\alpha_1} &\restr{\llll[\alpha_1]\oo[\id]}{\infty} = 0\\
			&\restr{\llll[\alpha_1]\oo[s_1]}{0} = e^{\alpha_1} &\restr{\llll[\alpha_1]\oo[s_1]}{\infty} = e^{-\alpha_1}\\
		\end{align*}
		Classes in the equivariant $K$-theory are determined by their restricions to the fixed point basis. That gives us equalities
		\begin{align*}
			\llll[\alpha_1]\oo[\id] &= e^{\alpha_1} \oo[\id] \\
			\llll[\alpha_1]\oo[s_1] &= e^{-\alpha_1} \oo[s_1] - (1 + e^{-\alpha_1}) \oo[\id].
		\end{align*}
		These calculations coincide with the formulas from Theorem \ref{Theorem - Chevalley formula for K-theory}.
	\end{uwg}
	\section{Properties of action of Weyl group in $\oo$ basis} \label{Properties of action of Weyl group in oo basis}
	Our goal is to study the $s_i^K$ action in the basis $\oo$ in equivariant $K$-theory. From now on, this basis will be denoted by $\mathcal{B}$. 
	\subsection{Proof of the main theorem} \label{Properties of action of Weyl group in oo basis - 1}
	In this subsection we prove the following theorem.
	\begin{tw} \label{Theorem - formula for s_i^K}
		Let $\omega \in W$. Then
		\[
			s_i^K(\oo) = 
			\begin{cases*}
				a_{\omega s_i} [\mathcal{O}_{\omega s_i}]  +  \sum_{\beta \in C_i}  a_{\omega s_i s_\beta} [\mathcal{O}_{\omega s_i s_\beta}] + l.o.t. & \text{if } $l(\omega s_i) > l(\omega)$\\
				\oo & \text{otherwise}
			\end{cases*}
		\]
	where $C_i = \{\gamma\dd \gamma \neq \alpha_i \text{ is a positive root such that } l(\omega s_i s_\gamma) = l(\omega) \text{ and } \langle \alpha_i, \gamma \rangle \neq 0\}$, $a_{\omega'}$ are non-zero elements of $R(T)$ and $l.o.t.$ is the sum of terms $a_{\omega'} [\mathcal{O}_{\omega'}]$ for $l(\omega') < l(\omega)$.
	\end{tw}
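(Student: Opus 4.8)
The plan is to substitute the closed formula of Lemma~\ref{Lemma - formula for s_i^{K}} into the basis $\mathcal{B}$ and to read off the expansion with the $K$-theoretic Chevalley formula. Writing
\[
s_i^K([\mathcal{O}_\omega]) = [\mathcal{L}(-\alpha_i)]\,[\mathcal{O}_\omega] - \bigl([\mathcal{L}(-\alpha_i)]-1\bigr)\,\partial_i^K([\mathcal{O}_\omega]),
\]
I would first evaluate $\partial_i^K([\mathcal{O}_\omega])$ by Proposition~\ref{Proposition - Demazure operator}. If $l(\omega s_i) < l(\omega)$ then $\partial_i^K([\mathcal{O}_\omega]) = [\mathcal{O}_\omega]$, the two multiples of $[\mathcal{L}(-\alpha_i)]$ cancel, and $s_i^K([\mathcal{O}_\omega]) = [\mathcal{O}_\omega]$; this disposes of the ``otherwise'' branch at once. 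If $l(\omega s_i) > l(\omega)$ then $\partial_i^K([\mathcal{O}_\omega]) = [\mathcal{O}_{\omega s_i}]$, so
\[
s_i^K([\mathcal{O}_\omega]) = [\mathcal{L}(-\alpha_i)]\,[\mathcal{O}_\omega] - [\mathcal{L}(-\alpha_i)]\,[\mathcal{O}_{\omega s_i}] + [\mathcal{O}_{\omega s_i}],
\]
and the whole statement reduces to expanding the two line-bundle products in $\mathcal{B}$.

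Fixing reduced words $\omega = s_{i_1}\cdots s_{i_n}$ and $\omega s_i = s_{i_1}\cdots s_{i_n} s_i$, I would apply Theorem~\ref{Theorem - Chevalley formula for K-theory} to each product and organise the result by a length (dimension) filtration. The structural point is that in $[\mathcal{L}(\alpha)]\,[\mathcal{O}_v]$ the index $x(\epsilon,v)$ has length at most the number of ones of $\epsilon$, so a Schubert class of the top length $l(v)$ occurs only for $\epsilon=(1,\ldots,1)$, whose coefficient is computed from the rule $T^1_\beta(e^\lambda)=e^{s_\beta\lambda}$. Reading off length $l(\omega)+1$: only $[\mathcal{L}(-\alpha_i)]\,[\mathcal{O}_{\omega s_i}]$ contributes, and its all-ones term equals $e^{\omega(\alpha_i)}[\mathcal{O}_{\omega s_i}]$. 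Together with the explicit $+[\mathcal{O}_{\omega s_i}]$ this gives $a_{\omega s_i} = 1 - e^{\omega(\alpha_i)}$, which is nonzero because $\omega(\alpha_i)$ is a nonzero root, so $e^{\omega(\alpha_i)} \neq 1$ in $R(T)$.

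The core of the proof is the layer of length $l(\omega)$. Such a class can arise in $[\mathcal{L}(-\alpha_i)]\,[\mathcal{O}_{\omega s_i}]$ only by dropping a single letter, i.e.\ from an $\epsilon$ with one zero in some position $j$; the index is then the Bruhat neighbour $\omega s_i s_{\beta_j}$ with $\beta_j = s_i s_{i_n}\cdots s_{i_{j+1}}(\alpha_{i_j})$, and the $\beta_j$ run over the distinct inversions of $\omega s_i$. Distinctness forces each length-$l(\omega)$ target to be produced by a single $\epsilon$, so no cancellation between positions can occur. Commuting the $T^1$ operators past the unique $T^0_{-\alpha_{i_j}}$ and using that each $T^1$ merely relabels exponents, the coefficient is the image under a Weyl element of $T^0_{-\alpha_{i_j}}(e^\nu)$ with $\nu = s_{i_{j+1}}\cdots s_{i_n}(\alpha_i)$ (whose monomials are distinct and carry a common sign, so there is no internal cancellation); by Definition~\ref{Definition - family of operators T} this is nonzero exactly when $\langle \nu, \alpha_{i_j}^\vee\rangle \neq 0$. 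The decisive step is the $W$-invariance identity $\langle \nu, \alpha_{i_j}^\vee\rangle = -\langle \alpha_i, \beta_j^\vee\rangle$, obtained by sliding the reflections across the pairing and using $s_i(\alpha_i) = -\alpha_i$; it turns the vanishing condition into $\langle\alpha_i,\beta_j\rangle = 0$ and so reproduces exactly the index set $C_i$ together with the nonvanishing of the $a_{\omega s_i s_\beta}$. The one exceptional inversion $\beta_j = \alpha_i$ (dropping the final letter) returns $\omega$ itself, and its contribution recombines with the leading term $e^{-\omega(\alpha_i)}[\mathcal{O}_\omega]$ of the first product, which explains why $\alpha_i$ is excluded from $C_i$.

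Everything else — the deeper terms of $[\mathcal{L}(-\alpha_i)]\,[\mathcal{O}_{\omega s_i}]$ coming from dropping two or more letters, and all of $[\mathcal{L}(-\alpha_i)]\,[\mathcal{O}_\omega]$ below its leading class — is indexed by Schubert varieties of length $< l(\omega)$ and is collected into l.o.t. I expect the main obstacle to be the bookkeeping of the length-$l(\omega)$ layer: matching dropped positions to the reflections $\beta_j$, confirming that the inversions are distinct so that distinct positions never feed the same class, and establishing the Weyl-invariance identity $\langle\nu,\alpha_{i_j}^\vee\rangle = -\langle\alpha_i,\beta_j^\vee\rangle$ that converts the combinatorial vanishing of $T^0$ into the root-theoretic condition defining $C_i$. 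The remaining steps are routine consequences of Proposition~\ref{Proposition - Demazure operator} and the length filtration supplied by Theorem~\ref{Theorem - Chevalley formula for K-theory}.
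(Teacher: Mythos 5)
Your proposal is correct and follows essentially the same route as the paper: reduce to $s_i^K([\mathcal{O}_\omega]) = [\mathcal{L}(-\alpha_i)][\mathcal{O}_\omega] - ([\mathcal{L}(-\alpha_i)]-1)[\mathcal{O}_{\omega s_i}]$ via Proposition~\ref{Proposition - Demazure operator}, expand both products by Theorem~\ref{Theorem - Chevalley formula for K-theory} filtered by length, identify the length-$l(\omega)$ layer with single-letter deletions (equivalently reflections $s_\beta$, via the Strong Exchange Condition and the distinctness of the deletion subwords, which the paper proves as Lemmas~\ref{Lemma - removing elements} and~\ref{Lemma - correspodance removing one element}), and convert the $T^0$-vanishing condition into $(\alpha_i,\beta)\neq 0$ by the same $W$-invariance of the pairing that the paper isolates as Lemma~\ref{Lemma - equality of products}. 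The only point where the paper is more explicit is the exceptional term $\beta=\alpha_i$: it computes the coefficient of $[\mathcal{O}_\omega]$ in $[\mathcal{L}(-\alpha_i)][\mathcal{O}_{\omega s_i}]$ to be $e^{-\omega(\alpha_i)}+1$, so that after recombination $[\mathcal{O}_\omega]$ survives with coefficient $-1$, whereas you leave the recombined coefficient unevaluated.
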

	\emph{The support} of action for fixed $\omega$ is the set of elements $\oo[\omega']$, that in formula for this action in the basis $\mathcal{B}$ have non-zero coefficient $a_{\omega'}$. Part of the support, which consist of elements $\oo[\omega']$ such that $l(\omega') = n$ for $n \in \mathbb{N}$ is called \emph{the support of lenght $n$} of this action.
	\subsubsection{Part I - preliminaries} \label{Properties of action of Weyl group in oo basis - 1 - 1}
	Lemma \ref{Lemma - formula for s_i^{K}} and Proposition \ref{Proposition - Demazure operator} imply that
	\begin{equation} \label{Equation - s_i^K formula}
		s_i^K(\oo) = \lllll \oo - (\lllll - 1)\oo[\omega s_i]     
	\end{equation}
	for $l(\omega s_i) > l(\omega)$ and that
	\begin{equation*}
		s_i^K(\oo) = \lllll \oo - (\lllll - 1)\oo = \oo
	\end{equation*}
	for $l(\omega s_i) < l(\omega)$. The second equation proves case $l(\omega s_i) \leq l(\omega)$ of Theorem \ref{Theorem - formula for s_i^K}. From now on we consider the first case only. The first step in our proof is simplifying Equation \ref{Equation - s_i^K formula} using the Chevalley formula. It is obvious that
	\begin{equation*}
		[\mathcal{L}(-\alpha_i)] [\mathcal{O}_\omega] = e^{-\omega(\alpha_i)} [\mathcal{O}_\omega] + l.o.t.
	\end{equation*}
	and
	\begin{align*}
		\lllll \oo[\omega s_i] &= e^{\omega s_i(-\alpha_i)} \oo[\omega s_i] + \text{ terms of length }l(\omega) + l.o.t. \\&= e^{\omega (\alpha_i)} \oo[\omega s_i] + \text{ terms of length }l(\omega) + l.o.t.    
	\end{align*}
	where "terms of length $l(\omega)$" is a linear combination (over $R(T)$) of the elements from the support of length $l(\omega)$ of left-multiplication by $\lllll$. Applying above equations to Equation \eqref{Equation - s_i^K formula} we obtain:
	\begin{equation} \label{Equation - s_i^K terms}
		s_i^K(\oo) = (1 - e^{\omega (\alpha_i)}) \oo[\omega s_i] + e^{-\omega(\alpha_i)} \oo - \text{ terms of length }l(\omega) + l.o.t.
	\end{equation}
	What needs to be understood is which elements belong to the support of length $l(\omega)$ of left-multiplication by $\lllll$. 
	\subsubsection{Part II - properties of Weyl group} \label{Properties of action of Weyl group in oo basis - 1 - 2}
	Let us fix a reduced decomposition $s_{i_1} s_{i_2} \ldots s_{i_n}$ of an element $\omega$ of the Weyl group. The Chevalley formula implies that elements $\oo[\omega']$ in the support of length $l(\omega) - 1$ of left-multiplication by $\lllll$ are in correspondence with the subwords $s_{i_1} s_{i_2} \ldots \widehat{s_{i_j}} \ldots s_{i_n}$ of $\omega$. Moreover, the following lemma holds.
	\begin{lem} \label{Lemma - removing elements}
		Let $s_{i_1} s_{i_2} \ldots s_{i_n}$ be a fixed representation of the word $\omega$. Then the words $\omega_k = s_{i_1} s_{i_2} \ldots \widehat{s_{i_k}} \ldots s_{i_n}$ and $\omega_l = s_{i_1} s_{i_2} \ldots \widehat{s_{i_l}} \ldots s_{i_n}$ are different.
	\end{lem}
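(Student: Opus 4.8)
The plan is to reduce the claim to a length count on consecutive subwords of the fixed reduced word. Suppose toward a contradiction that $\omega_k = \omega_l$ for some $k < l$. By construction $\omega_k$ and $\omega_l$ are both obtained from $s_{i_1}\cdots s_{i_n}$ by deleting a single letter, so as words they literally agree on the prefix $s_{i_1}\cdots s_{i_{k-1}}$ and on the suffix $s_{i_{l+1}}\cdots s_{i_n}$. First I would cancel these common factors — on the left and on the right respectively — from the equality $\omega_k=\omega_l$, which leaves $s_{i_{k+1}}\cdots s_{i_l}=s_{i_k} s_{i_{k+1}}\cdots s_{i_{l-1}}$; left-multiplying by $s_{i_k}$ then yields
\[
 s_{i_k} s_{i_{k+1}} \cdots s_{i_l} = s_{i_{k+1}} \cdots s_{i_{l-1}} .
\]

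The decisive step is then a comparison of lengths. The left-hand side is a consecutive block of the fixed reduced decomposition, hence is itself reduced and has length exactly $l-k+1$. The right-hand side is a product of only $l-k-1$ generators, so its length is at most $l-k-1$. Since equal elements of $W$ have equal length, this gives $l-k+1 \le l-k-1$, which is absurd; therefore $\omega_k \neq \omega_l$, proving the lemma.

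I do not expect a serious obstacle here: the only thing to be careful about is the bookkeeping in the cancellation step, namely correctly identifying the prefix and suffix to be removed and applying the final multiplication on the correct side. As a conceptual cross-check, and an alternative proof, I would record that $\omega_k = \omega\, s_{\beta_k}$ with $\beta_k = s_{i_n}\cdots s_{i_{k+1}}(\alpha_{i_k})$, so that $\omega_k = \omega_l$ is equivalent to $\beta_k = \beta_l$; the roots $\beta_k$ are exactly the inversion roots of $\omega^{-1}$ read off from its reduced word $s_{i_n}\cdots s_{i_1}$, and these are pairwise distinct precisely because $l(\omega^{-1}) = n$. I would nevertheless keep the elementary length argument as the main proof, since it requires nothing beyond the fact that a consecutive subword of a reduced word is itself reduced.
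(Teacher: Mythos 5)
Your main argument is correct and is essentially the paper's own proof: both cancel the common prefix and suffix, left-multiply by the extra generator, and derive a contradiction by comparing the length of a reduced consecutive block ($l-k+1$) with a product of only $l-k-1$ generators. Your version is, if anything, slightly more careful than the paper's in writing ``at most $l-k-1$'' rather than asserting equality, and the inversion-root cross-check you mention is a valid alternative but not needed.
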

	\begin{proof}
		Assume that $k = 1$ and $l = n$. Otherwise, we can consider the word $s_{i_{k - 1}} \ldots s_{i_1} \omega s_{i_n} \ldots s_{i_{l + 1}}$. The following equalities hold:
		\begin{align*}
			\omega_k &= \omega_l \\
			s_{i_1} \omega_k &= s_{i_1} \omega_l \\
			\omega &= \omega'
		\end{align*} 
		where $\omega'$ is $\omega$ with the first and the last element removed. Nevertheless $l(\omega') = l(\omega) - 2 \neq l(\omega)$,  which is a contradiction.
	\end{proof}
	The next step is to prove the following lemma.
	\begin{lem} \label{Lemma - correspodance removing one element}
		Let $\omega = s_{i_1} s_{i_2} \ldots s_{i_n}$. The subwords of $\omega$ of length $l(\omega) - 1$ obtained by removing one element from $\omega$ are in bijection with the words $\omega s_\beta$ such that $l(\omega s_\beta) = l(\omega) - 1$ for $\beta$ being a positive root.
	\end{lem}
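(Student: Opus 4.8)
The plan is to prove the statement by exhibiting an explicit correspondence and then checking that it is a bijection from both sides. Write $\omega = s_{i_1}\cdots s_{i_n}$ for the fixed reduced decomposition, and for each $k$ let $\omega_k = s_{i_1}\cdots \widehat{s_{i_k}}\cdots s_{i_n}$ denote the deletion of the $k$-th letter. I would set up the map sending a reduced deletion (one with $l(\omega_k) = l(\omega) - 1$) to the positive root $\beta$ characterised by $\omega_k = \omega s_\beta$, and then show that this map is well defined, injective, and surjective onto $\{\omega s_\beta : \beta > 0,\ l(\omega s_\beta) = l(\omega) - 1\}$.

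First I would carry out the direct computation identifying the reflection attached to a deletion. Using that the middle block $s_{i_{k-1}}\cdots s_{i_1}\, s_{i_1}\cdots s_{i_{k-1}}$ cancels, one obtains
\[
\omega^{-1}\omega_k = s_{i_n}\cdots s_{i_{k+1}}\, s_{i_k}\, s_{i_{k+1}}\cdots s_{i_n} = s_{\beta_k}, \qquad \beta_k = s_{i_n}\cdots s_{i_{k+1}}(\alpha_{i_k}).
\]
Since $s_{\beta_k} = s_{-\beta_k}$, I may replace $\beta_k$ by its positive representative, so that $\omega_k = \omega s_{\beta_k}$ with $\beta_k$ a positive root. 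In particular every deletion already has the shape $\omega s_\beta$, and restricting to those $\omega_k$ with $l(\omega_k) = l(\omega) - 1$ places them inside the target set. Injectivity is then immediate from Lemma \ref{Lemma - removing elements}: distinct indices give distinct group elements $\omega_k$, hence distinct reflections $s_{\beta_k} = \omega^{-1}\omega_k$ and distinct positive roots.

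It remains to prove surjectivity, which I expect to be the crux. Given a positive root $\beta$ with $l(\omega s_\beta) = l(\omega) - 1 < l(\omega)$, I would invoke the \emph{strong exchange condition} for Coxeter groups: since right multiplication by the reflection $s_\beta$ strictly decreases the length of $\omega$, and $s_{i_1}\cdots s_{i_n}$ is a word for $\omega$, there exists an index $k$ with $\omega s_\beta = s_{i_1}\cdots \widehat{s_{i_k}}\cdots s_{i_n} = \omega_k$. The length hypothesis $l(\omega s_\beta) = l(\omega) - 1$ guarantees that this deletion is reduced, so $\omega s_\beta$ is a genuine element of the source set, which completes the bijection.

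The main obstacle is precisely this surjectivity step: it is the one place where a nontrivial structural fact about Weyl groups, the strong exchange condition, is required, whereas the identification $\omega_k = \omega s_{\beta_k}$ is a mechanical cancellation and injectivity is supplied by the preceding lemma. A secondary subtlety worth flagging in the write-up is that not every one-letter deletion of a reduced word is itself reduced (for instance, deleting the middle letter of $s_1 s_2 s_1$ yields the identity), which is exactly why the statement matches deletions of length $l(\omega) - 1$ against reflections with $l(\omega s_\beta) = l(\omega) - 1$, rather than all deletions.
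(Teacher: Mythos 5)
Your argument is correct and follows essentially the same route as the paper's proof: the deleted word $\omega_k$ is identified with $\omega s_{\beta_k}$ for $\beta_k = s_{i_n}\cdots s_{i_{k+1}}(\alpha_{i_k})$, injectivity comes from Lemma \ref{Lemma - removing elements}, and surjectivity from the strong exchange condition. The only cosmetic difference is that the paper verifies directly that $\beta_k$ is a positive root (using that $s_{i_n}\cdots s_{i_{k+1}}s_{i_k}$ is reduced), whereas you simply pass to the positive representative via $s_{\beta} = s_{-\beta}$; both are equally valid.
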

	\begin{proof}
		\ 
		\begin{itemize}
			\item [$(\Leftarrow)$]
			\begin{lem}{\cite[Theorem 5.8]{Hum}} [Strong Exchange Condition]
				If $\omega = s_{i_1} s_{i_2} \ldots s_{i_n}$ is a reduced word and $t$ is a reflection satisfying $l(\omega t) < l(\omega)$ then there exist $k$ such that $\omega t =  s_{i_1} s_{i_2} \ldots \widehat{s_{i_k}} \ldots s_{i_n}$. 
			\end{lem}
			\item [$(\Rightarrow)$]
			We must prove that $s_{i_1} \ldots \widehat{s_{i_k}} \ldots s_{i_n}$ is equal to $\omega t$ for reflection in the positive root $\beta$. Let $\beta = \omega'(\alpha_k)$ where $\omega'$ is defined as $s_{i_n} \ldots s_{i_{k+1}}$.
			\begin{lem}
				The root $\beta$ defined as above is a positive root.
			\end{lem}
			\begin{proof}
				From \cite[Theorem 5.4]{Hum} it follows that if $l(\omega s_\alpha) > l(\omega)$ for a simple root $\alpha$, then $\omega(\alpha)$ is a positive root. We know that $\omega' s_{i_{k}}$ is a reduced word, because $\omega$ is as well, so $l(\omega' s_{i_{k}}) > l(\omega')$, so $\omega'(\alpha_k)$ is a positive root.
			\end{proof}
			Moreover, the element $\omega' s_{\alpha_k} (\omega')^{-1}$ is a reflection in a positive root $\beta$ (\cite[Lemma 5.7]{Hum}), so when we take $t = s_\beta$, the desired equality holds:
			\begin{equation*}
				\omega t = \omega s_\beta = \omega \omega' s_{i_k} (\omega')^{-1} = s_{i_1} \ldots \widehat{s_{i_k}} \ldots s_{i_n}.
			\end{equation*}
			\end{itemize}
	\end{proof}
	\subsubsection{Part III - support of Chevalley formula} \label{Properties of action of Weyl group in oo basis - 1 - 3}
	Let $\sgn\dd \mathbb{Z} \to \{+,-,0\}$ denote the function of the sign of a number. The following lemma holds.
	\begin{lem} \label{Lemma - equality of products}
		For $\beta$, $\omega'$ and $\alpha_k$ defined as above and for $\alpha$ a root, the following equality holds
		\begin{equation*}
			\sgn \langle (\omega')^{-1}(\alpha), \alpha_k^\vee \rangle = \sgn ( \beta, \alpha ). 
		\end{equation*}
	\end{lem}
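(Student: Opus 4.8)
The plan is to reduce the claimed sign identity to two elementary facts: the positivity of the normalizing scalar appearing in the coroot pairing, and the $W$-invariance of the inner product $(-,-)$ induced by the Killing form.

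First I would unwind the definition of the coroot pairing recalled in the opening Remark, namely $\langle \mu, \alpha_k^\vee \rangle = 2\,(\mu,\alpha_k)/(\alpha_k,\alpha_k)$. Applying this with $\mu = (\omega')^{-1}(\alpha)$ and observing that $(\alpha_k,\alpha_k) > 0$, the factor $2/(\alpha_k,\alpha_k)$ is a positive real number and hence does not change signs. Thus $\sgn\langle (\omega')^{-1}(\alpha), \alpha_k^\vee\rangle = \sgn\bigl((\omega')^{-1}(\alpha), \alpha_k\bigr)$, and it remains only to compare this last quantity with $(\beta,\alpha)$.

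For the second step I would invoke that each element of the Weyl group acts on $\mathfrak{h}^*$ as an isometry for $(-,-)$, a standard property of the Killing-form pairing. Applying this to $\omega'$ yields $\bigl((\omega')^{-1}(\alpha),\alpha_k\bigr) = \bigl(\alpha, \omega'(\alpha_k)\bigr)$, and since $\beta = \omega'(\alpha_k)$ by definition and $(-,-)$ is symmetric, the right-hand side equals $(\alpha,\beta) = (\beta,\alpha)$. Chaining the two displayed equalities gives $\sgn\langle (\omega')^{-1}(\alpha),\alpha_k^\vee\rangle = \sgn(\beta,\alpha)$, which is precisely the asserted equality.

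I do not expect any genuine obstacle here; the whole argument is a short computation. The only two points that deserve care are verifying that the normalization $2/(\alpha_k,\alpha_k)$ is manifestly positive (so that passing from the coroot pairing to the plain inner product preserves the sign), and keeping track of which element ($\omega'$ versus its inverse) is moved across the inner product, so that $\omega'(\alpha_k)$ (rather than $(\omega')^{-1}(\alpha_k)$) appears and matches the definition of $\beta$.
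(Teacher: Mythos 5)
Your argument is correct and coincides with the paper's own proof: both unwind the coroot pairing, discard the positive factor $2/(\alpha_k,\alpha_k)$, and use the $W$-invariance and symmetry of the Killing-form inner product together with $\beta = \omega'(\alpha_k)$. No further comment is needed.
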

	\begin{proof}
		\begin{align*}
			\sgn \langle (\omega')^{-1}(\alpha), \alpha_k^\vee \rangle &= \sgn \frac{2 ((\omega')^{-1}(\alpha), \alpha_k)}{(\alpha_k, \alpha_k)} = \sgn (2 ((\omega')^{-1}(\alpha), \alpha_k)) =\\ &=  \sgn ((\alpha, \omega'(\alpha_k))) = \sgn ((\omega'(\alpha_k), \alpha)) = \sgn (\beta, \alpha)
		\end{align*}
	\end{proof}
	From Definition \ref{Definition - family of operators T} we know, that for a fixed $\omega$ and a fixed $i$ the support of length $l(\omega)$ of left-multiplication $\oo[\omega s_i]$ by $\lllll$ consist of elements $\oo[\omega'']$ such that $\omega'' = s_{i_1} \ldots \widehat{s_{i_k}} \ldots s_{i_n} s_i$ and $\langle (\omega')^{-1}(\alpha_i),\alpha_k^\vee \rangle \neq 0$ for $\omega'$ defined as in the Subsection \ref{Properties of action of Weyl group in oo basis - 1 - 2}. By Lemma \ref{Lemma - removing elements} this is equivalent to $\omega'' = \omega s_i s_\beta$ for positive roots $\beta$ satisfying $(\beta, \alpha_i) \neq 0$ and $l(\omega s_i s_\beta) = l(\omega)$. The only detail demanding explanation now is that the coefficient of $\oo$ in $\lllll \oo[\omega s_i]$ is not equal to $e^{-\omega(\alpha_i)}$. If that was a case, the support of $s_i^K(\oo)$ would not contain $\oo$ from Equation \eqref{Equation - s_i^K terms} (because $\lllll \oo = e^{-\omega(\alpha_i)} \oo + l.o.t.$). The Chevalley formula implies that this coefficient is equal to $e^{-\omega(\alpha_i)} + 1$. Therefore, Theorem \ref{Theorem - formula for s_i^K} is proved.
	\subsection{Example of $SL_n/B$} \label{Properties of action of Weyl group in oo basis - 2}
	In this section, the main goal is to study the automorphism $s_i^K$ on $K_T(SL_n/B)$. In this example, we can calculate coefficients $a_{\omega s_i s_\beta}$ from Theorem \ref{Theorem - formula for s_i^K}. \\ \\ Every two different roots $\alpha, \beta$  of the Lie algebra $\mathfrak{sl}_n$ satisfy $(\alpha, \beta) \in \{-1,0,1\}$, so the operator $T^0_\beta$ in the Chevalley formula (Definition \ref{Definition - family of operators T}) simplifies to
	\begin{align} \label{Equation - operators T for SL_n}
		T_\beta^0(e^\lambda) &=
		\begin{dcases*}
			0 & \text{if } $\langle \lambda,\beta^\vee \rangle = 0$, \\
			e^\lambda & \text{if } $\langle \lambda,\beta^\vee \rangle = 1$, \\
			e^\lambda + 1 & \text{if } $\lambda = \beta$  \\
			-1 - e^{-\lambda} & \text{if } $\lambda = -\beta$  \\
			-e^{\lambda + \beta}& \text{if } $\langle \lambda,\beta^\vee \rangle = -1$.
		\end{dcases*}
	\end{align}
	First, we calculate the coefficient of the element $\oo$ in $\lllll \oo[\omega s_i]$. Equation \eqref{Equation - operators T for SL_n} and Lemma \ref{Lemma - removing elements} show that this coefficient is equal to $1 + e^{-\omega(\alpha_i)}$. Equation \eqref{Equation - s_i^K terms} gives
	\begin{equation*}
		s_i^K(\oo) = e^{-\omega(\alpha_i)} \oo - (1 + e^{-\omega(\alpha_i)}) \oo + \ldots = -\oo + \ldots.
	\end{equation*}
	Since we know $a_\omega = -1$, we calculate the coefficient $a_{\omega s_i s_\beta}$ of the element $\oo[\omega s_i s_\beta]$ in $\lllll \oo[\omega s_i]$. Let us assume that $\omega s_i s_\beta$ is the subword of $\omega s_i =  s_{i_1} \ldots \ldots s_{i_n} s_i$ with one element removed as follows: $s_{i_1} \ldots \widehat{s_{i_k}} \ldots s_{i_n} s_i$. Let $\lambda = s_{i_{k + 1}} \ldots s_i(-\alpha_i)$. By the Chevalley formula, $a_{\omega s_i s_\beta}$ is equal to 
	\begin{align*}
		T_{-\alpha_{i_1}}^1 \ldots T_{-\alpha_{i_k}}^0 \ldots T_{-\alpha_{i_n}}^1 T_{-\alpha_i}^1(e^{-\alpha_i}) &= T_{-\alpha_{i_1}}^1 \ldots T_{-\alpha_{i_k}}^0 (e^{s_{i_{k + 1}} \ldots s_i(-\alpha_i)}) = \\ &= T_{-\alpha_{i_1}}^1 \ldots T_{-\alpha_{i_k}}^0(e^{\lambda}).
	\end{align*}
	We will prove, that $\lambda \neq \pm \alpha_{i_k}$. Firstly, let us assume that $i_k \neq i$. By \cite[Theorem 5.4]{Hum}  if $l(\omega s_\alpha) > l(\omega)$ for a simple root $\alpha$, then $\omega(\alpha)$ is a positive root. Applying this to $\omega' = s_{i_{k + 1}} \ldots s_{i_n}$ and $\omega'' = s_{i_k} \ldots s_{i_n}$ implies that both $\lambda$ and $-\lambda$ are simple roots, which is a contradiction. If $i_k = i$, then Lemma \ref{Lemma - removing elements} implies that $s_{i_1} \ldots \widehat{s_{i_k}} \ldots s_{i_n} s_i = \omega$ and this case is already calculated. From Equation \eqref{Equation - operators T for SL_n}, three options arise.
	\begin{itemize}
		\item $\langle \lambda, -\alpha_k^\vee \rangle = 1$. From Equation \eqref{Equation - operators T for SL_n}
		\begin{align*}
			a_{\omega s_i s_\beta} &= T_{-\alpha_{i_1}}^1 \ldots T_{-\alpha_{i_k}}^0(e^{\lambda}) = T_{-\alpha_{i_1}}^1 \ldots T_{-\alpha_{i_{k-1}}}^1(e^{\lambda}) = \\ &= T_{-\alpha_{i_1}}^1 \ldots T_{-\alpha_{i_{k-1}}}^1(e^{s_{i_{k + 1}} \ldots s_i(-\alpha_i)}) = e^{-\omega s_i s_\beta(\alpha_i)}.
		\end{align*}
		\item $\langle \lambda, -\alpha_k^\vee \rangle = 0$, which implies that $T_{-\alpha_{i_k}}^0(e^\lambda)$ is equal to zero and therefore $a_{\omega s_i s_\beta} = 0$.
		\item $\langle \lambda, -\alpha_k^\vee \rangle = -1$. Equation \eqref{Equation - operators T for SL_n} gives
		\begin{equation*}
			T_{-\alpha_{i_k}}^0(e^\lambda) = -e^{\lambda - \alpha_k}
		\end{equation*}
		Now, let us establish the conditions for $\lambda$ to satisfy the equation $\langle \lambda, -\alpha_k^\vee \rangle  = -1$. In $\mathfrak{sl_n}$, presenting an element as the sum of simple roots makes every coefficient equal to either $-1$, $0$ or $1$.  Moreover, if coefficients of $\alpha_{i - 1}$ and $\alpha_{i + 1}$ are non-zero, then so is the coefficient of $\alpha_i$ and all of them are equal. If we want the equality $\langle \lambda, -\alpha_k^\vee \rangle  = -1$ to hold, then $\lambda$ written as a sum of simple roots must look as follows: $\alpha_i + \ldots + \alpha_{k-1} + \alpha_{k}$ or $\alpha_{k} + \alpha_{k+1} + \ldots + \alpha_j$. In both cases $s_k(\lambda) = \lambda - \alpha_k$, because
		\begin{equation*}
			s_i(\alpha_j) = 
			\begin{dcases*}
				\alpha_i + \alpha_j & \text{ if } $i = j \pm 1$ \\
				-\alpha_j & \text{ if } $i = j$ \\
				\alpha_j & \text{ otherwise.} \\
			\end{dcases*}
		\end{equation*}
		That gives us the following
		\begin{equation*}
			T_{-\alpha_{i_k}}^0(e^\lambda) = -e^{\lambda - \alpha_k} = -e^{s_k(\lambda)}.
		\end{equation*}
		It follows that 
		\begin{align*}
			a_{\omega s_i s_\beta} &= T_{-\alpha_{i_1}}^1 \ldots T_{-\alpha_{i_k}}^0(e^{\lambda}) = T_{-\alpha_{i_1}}^1 \ldots T_{-\alpha_{i_{k - 1}}}^1(-e^{s_k(\lambda)}) \\ &= T_{-\alpha_{i_1}}^1 \ldots T_{-\alpha_{i_{k - 1}}}^1(-e^{s_k(s_{i_{k + 1}} \ldots s_i(-\alpha_i))}) = -e^{\omega s_i(-\alpha_i)} = -e^{\omega (\alpha_i)}.
		\end{align*}
	\end{itemize}
	Combining these results with the Equation \eqref{Equation - s_i^K terms} and Lemma \ref{Lemma - equality of products}, we obtain:
	\begin{align} \label{Equation - s_i formula in SL_n}
		s_i^K(\oo) &= (1 - e^{\omega (\alpha_i)}) \oo[\omega s_i] - \oo  \nonumber \\ &- \sum_{\beta \neq \alpha_i \text{ positive root such that } l(\omega s_i s_\beta) = l(\omega) \text{ and } \langle \alpha_i, \beta \rangle > 0} e^{-\omega s_i s_\beta(\alpha_i)} \oo[\omega s_i s_\beta]  \nonumber \\ &+ \sum_{\beta \neq \alpha_i \ \text{ positive root such that } l(\omega s_i s_\beta) = l(\omega) \text{ and } \langle \alpha_i, \beta \rangle < 0} e^{\omega(\alpha_i)} \oo[\omega s_i s_\beta] \nonumber \\ &+ l.o.t.
	\end{align} 
	As an example of this formula, we are presenting the full formula for the $s_1$ action below, in the case of the flag variety $SL_3/B$.
	\begin{figure}[H]
		\begin{center}
			\begin{tabular}{c|cccccc}
			&$\oo[\id] $&$ \oo[s_1] $&$ \oo[s_2] $&$ \oo[s_1s_2] $&$ \oo[s_2s_1] $&$ \oo[s_1s_2s_1]$\\
			\hline
			$\oo[\id] $&$-1 $&$ 0 $&$ 0 $&$ 0 $&$ 0 $&$ 0$ \\
			$\oo[s_1] $&$1-e^{\alpha _1} $&$ 1 $&$ -e^{\alpha _1} $&$ 1 $&$ 0 $&$ 0$ \\
			$\oo[s_2] $&$0 $&$ 0 $&$ -1 $&$ 0 $&$ 0 $&$ 0$ \\
			$\oo[s_1s_2] $&$0 $&$ 0 $&$ 0 $&$ -1 $&$ 0 $&$ 0$ \\
			$\oo[s_2s_1] $&$0 $&$ 0 $&$ 1-e^{\alpha _1+\alpha _2} $&$ e^{\alpha _2} $&$ 1 $&$ 0$ \\
			$\oo[s_1s_2s_1] $&$0 $&$ 0 $&$ 0 $&$ 1-e^{\alpha _2} $&$ 0 $&$ 1$ \\
		\end{tabular}
		\end{center}
		\caption{Matrix of automorphism $s_1$ in the basis $\mathcal{B}$ for the flag variety $SL_3/B$.} \label{Figure - action s_1 in SL_3} 
	\end{figure}
\section{Properties of action of Weyl group in $\mC_y(X_\omega^\circ)$ basis} \label{Properties of action of Weyl group in MC basis}
In this chapter we study the automorphism $s_i^K$ in the $mC_y(X_\omega^\circ)$ basis of $K_T(G/B)_{loc}$. Using Lemma \ref{Lemma - formula for s_i^{K}}.
\begin{align*} 
	s_i^K(mC_y(X_\omega^\circ)) &= \lllll mC_y(X_\omega^\circ) - (\lllll - 1) \partial_i^K(mC_y(X_\omega^\circ))
\end{align*}
Since $\llll \lllll = \id$ (because $e^{\alpha_i} \cdot e^{-\alpha_i} = 1$), we get
\begin{align} \label{Equation - s_i formula in MC basis}
	s_i^K(mC_y(X_\omega^\circ)) &= \lllll \left(mC_y(X_\omega^\circ) - (1 - \llll) \partial_i^K(mC_y(X_\omega^\circ))\right).
\end{align}
From Definition \ref{Definition - operator T_i}, the following holds for $y \neq 0$:
\begin{equation*}
	\llll \partial_i^K = \frac{\mathcal{T}_i - \partial_i^K + \id}{y}.
\end{equation*}
That leads to the equation
\begin{align*}
	&s_i^K(mC_y(X_\omega^\circ)) = \\ &= \lllll \left(mC_y(X_\omega^\circ) - \partial_i^K(mC_y(X_\omega^\circ)) + \frac{\mathcal{T}_i - \partial_i^K + \id}{y}(mC_y(X_\omega^\circ)) \right) = \\ &= \lllll \frac{ mC_y(X_{\omega s_i}^\circ) - (y + 1)\partial_i^K(mC_y(X_\omega^\circ)) + (y + 1)mC_y(X_\omega^\circ)}{y}.
\end{align*}
Unfortunately, in this basis no easy description of the action $\partial_i^K$ is known, which makes such presentation unuseful. For $y = -1$ the situation is much easier as we will see further in this chapter.\\ \\
Another basis worth considering is the basis of motivic Chern classes with $y = 0$, which is the basis of classes $\mathcal{I}_\omega = \oo - \oo[\partial X_\omega]$. They satisfy the following equation
\begin{equation*}
	(\partial_i^K - \id)(\mathcal{I}_\omega) = \mathcal{T}_i(\mathcal{I}_\omega) = 
	\begin{dcases*}
		\mathcal{I}_{\omega s_i} & $\text{ if } l(\omega s_i) > l(\omega)$ \\
		- \mathcal{I}_\omega & \text{ otherwise.}
	\end{dcases*}
\end{equation*} 
Therefore
\begin{equation*}
	s_i^K(\mathcal{I}_\omega) =
	\begin{dcases*}
		(\id - \lllll) \mathcal{I}_{\omega s_i} + \mathcal{I}_\omega & $\text{ if } l(\omega s_i) > l(\omega)$ \\
		\lllll \mathcal{I}_\omega & \text{ otherwise.}
	\end{dcases*}
\end{equation*}
As we have just shown, to describe the automorphism $s_i^K$ in this basis, we need to use the Chevalley formula only once, not - as in the basis $\mathcal{B}$ - twice. The description of the Chevalley formula in the $\mathcal{I}_\omega$ basis demands further calculations. 
\begin{figure}[H]
	\centering{
	\begin{tabular}{c|cccccc}
		&$\ooo[\id] $&$ \ooo[s_1] $&$ \ooo[s_2] $&$ \ooo[s_1s_2] $&$ \ooo[s_2s_1] $&$ \ooo[s_1s_2s_1]$\\
		\hline
	\Transpose{
			$\ooo[\id] $&$ \ooo[s_1] $&$ \ooo[s_2] $&$ \ooo[s_1s_2] $&$ \ooo[s_2s_1] $&$ \ooo[s_1s_2s_1]$\\
			$-e^{\alpha _1} $&$ 1-e^{\alpha _1} $&$ 0 $&$ 0 $&$ 0 $&$ 0$ \\
			$1 + e^{\alpha_1} $&$ e^{\alpha _1} $&$ 0 $&$ 0 $&$ 0 $&$ 0$ \\
			$-e^{\alpha _1+\alpha _2} $&$ -e^{\alpha _1+\alpha _2} $&$ -e^{\alpha _1+\alpha _2} $&$ 0 $&$ 1-e^{\alpha _1+\alpha _2} $&$ 0$ \\
			$e^{\alpha _1+\alpha _2} $&$ e^{\alpha _1+\alpha _2} $&$ e^{\alpha _1+\alpha _2} $&$ -e^{\alpha _2} $&$ e^{\alpha _1+\alpha _2} $&$ 1-e^{\alpha_2}$ \\
			$e^{\alpha _1+\alpha _2} $&$ e^{\alpha _1+\alpha _2} $&$ 1 + e^{\alpha_1+\alpha_2} $&$ 0 $&$ e^{\alpha _1+\alpha _2} $&$ 0$ \\
			$-e^{\alpha _1+\alpha _2} $&$ -e^{\alpha _1+\alpha _2} $&$ -e^{\alpha _1+\alpha _2} $&$ 1 + e^{\alpha_2} $&$ -e^{\alpha_1+\alpha _2} $&$ e^{\alpha _2}$ 
}%
	\end{tabular}
	\caption{Matrix of automorphism $s_1$ in the $\ooo$ basis for the flag variety $SL_3/B$.}
}
\end{figure}
The simplest case is the basis of motivic Chern classes with fixed $y = -1$. In such basis, the automorphism $s_i$ is well described. The following equality holds (\cite[Lemma 3.7]{AMSS2}):
\begin{equation} \label{Equation - Chevalley formula in basis of fixed points}
	\lllll \oo[e_\omega] = e^{-\omega \alpha_i} \oo[e_\omega].	
\end{equation}
Therefore, Definition \ref{Definition - operator T_i} takes the following form 
\begin{equation*}
	\mathcal{T}_i^K = \partial_i^K - \llll \partial_i^K - \id
\end{equation*}
and Equation \eqref{Equation - s_i formula in MC basis} presents as
\begin{equation*}
	s_i^K([\mathcal{U}]) = \lllll(-\mathcal{T}_i([\mathcal{U}])).
\end{equation*}
Substituting $\oo[e_\omega]$ in the latter leads to the following formula
\begin{equation*}
	s_i^K(\oo[e_\omega]) = -\lllll(\mathcal{T}_i(\oo[e_\omega])) = -\lllll \oo[e_{\omega s_i}]
\end{equation*}
which, combined with the Equation \eqref{Equation - Chevalley formula in basis of fixed points}, results in the final formula
\begin{equation*}
	s_i^K(\oo[e_\omega]) = -e^{\omega \alpha_i} \oo[e_{\omega s_i}].
\end{equation*}
Unfortunately, topological properties of this basis are not interesting, because elements of it are defined as the classes of the sheaf over point, which does not provide much information about the topology of the flag variety. However, we can write the action of the Weyl group in it explicitly, which in itself is interesting.
\begin{figure}
	\centering{
		\begin{tabular}{c|cccccc}
			&$\oooo[\id] $&$ \oooo[s_1] $&$ \oooo[s_2] $&$ \oooo[s_1s_2] $&$ \oooo[s_2s_1] $&$ \oooo[s_1s_2s_1]$\\
			\hline
			\Transpose{
				$\oooo[\id] $&$ \oooo[s_1] $&$ \oooo[s_2] $&$ \oooo[s_1s_2] $&$ \oooo[s_2s_1] $&$ \oooo[s_1s_2s_1]$\\
				$0 $&$ -e^{\alpha _1} $&$ 0 $&$ 0 $&$ 0 $&$ 0$ \\
				$-e^{-\alpha _1} $&$ 0 $&$ 0 $&$ 0 $&$ 0 $&$ 0$ \\
				$0 $&$ 0 $&$ 0 $&$ 0 $&$ -e^{\alpha _1+\alpha _2} $&$ 0$ \\
				$0 $&$ 0 $&$ 0 $&$ 0 $&$ 0 $&$ -e^{\alpha _2}$ \\
				$0 $&$ 0 $&$ -e^{-\alpha _1-\alpha _2} $&$ 0 $&$ 0 $&$ 0$ \\
				$0 $&$ 0 $&$ 0 $&$ -e^{-\alpha _2} $&$ 0 $&$ 0$ 
			}%
		\end{tabular}
		\caption{Matrix of automorphism $s_1$ in the $\oooo$ basis for the flag variety $SL_3/B$.}
	}
\end{figure}
\section{Conclusions} \label{Conclusions}
In order to calculate the coefficients of the automorphism $s_i^K$, we must multiply the class of Schubert variety with the class of a linear bundle twice, which means using the Chevalley formula twice, which is computationally challenging. Switching to the motivic Chern classes basis changes the Chevalley formula (shown in \cite{MNS}), but the formula is still combinatorically too complicated to simplify these calculations. As we saw in Chapter \ref{Chevalley formula}, the problem with the Chevalley formula is that the support of it is not described in terms of the elements of the Weyl group, but the monoid $W'$. We only know that the support is subset of $\{\oo[\omega']\}$ for $\omega'$ being the subword of $\omega s_i$. That is because, for example, a certain subword can be constructed in two ways in the Weyl group, but in three ways in the monoid $W'$, as we explain below. \\ \\ Let us take a word $s_1 s_2 s_1$ in the Weyl group of $SL_n$. It has a subword $s_1$. It can be constructed by removing $s_2$ and the last $s_1$ or by removing $s_2$ and the first $s_1$. However, in this monoid a relation $s_1 s_1 = s_1$ holds, so there are three constructions possible - the two mentioned above and one additional, executed by only removing $s_2$. These additional combinations can transform element from being in the support to not being in the support and the other way around. For example the coefficient of $\oo[\id]$ in $s_1^K \oo[s_1 s_2]$ is equal to zero (Figure \ref{Figure - action s_1 in SL_3}), but would be non-zero if we would consider only subwords in the Weyl group because one non-zero term of $\llll[-\alpha_1] \oo[s_1 s_2 s_1]$ is equal to
\begin{equation*}
	T_{\alpha_1}^1 T_{\alpha_2}^0  T_{\alpha_1}^0(e^{-\alpha_1}).
\end{equation*}
A possible solution to this problem might be a different choice of bases or indexing elements of basis $\mathcal{B}$ not by elements of the Weyl group $W$, but by elements of the monoid (or, in fact, the distinguished basis of Hecke algebra) $W'$. These two objects have the same elements, but the fact that the relations holding in the monoid are different from the Weyl group could help to describe the support of the automorphism $s_i^K$ explicitely.
\printbibliography

\end{document}